\documentclass[10pt]{article}
\usepackage{amsfonts}
\usepackage{amsmath}
\usepackage{amssymb}
\usepackage{empheq}
\usepackage{latexsym}
\usepackage{amscd}
\usepackage{bbm}
\usepackage{url}
\usepackage[left=3.4cm,right=3.4cm, top=2.5cm,bottom=2.5cm,bindingoffset=0cm]{geometry}

\newcommand\addressT{\noindent\leavevmode
\medskip
\noindent
Kateryna Tatarko, \\
Dept.~of Math.~and Stat.~Sciences,\\
University of Alberta, \\
Edmonton, Alberta, Canada, T6G 2G1.\\
\texttt{\small
e-mail:  tatarko@ualberta.ca}
}

\newcommand\addressW{\noindent\leavevmode
	\medskip
	\noindent
	Elisabeth Werner, \\
	Department of Mathematics,\\
	Case Western Reserve University, \\
	Cleveland, Ohio, USA, 44106.\\
	\texttt{\small
		e-mail: elisabeth.werner@case.edu }
}
\date{}

\newcounter{theorem}[section]
\newtheorem{theorem}{Theorem}

\newtheorem{cor}[theorem]{Corollary}

\newtheorem{theoremA}{Theorem A}

\newcommand{\R}{\mathbb{R}}

\def\vol{{\rm vol}}

\begin{document}

\title{A   Steiner formula in the $L_p$ Brunn Minkowski theory
\footnote{Keywords: Steiner formula, curvature measures, (Dual) Brunn Minkowski theory,  $L_p$ Brunn Minkowski theory, 2010 Mathematics Subject Classification: 52A39 (Primary), 28A75, 52A20, 53A07 (Secondary) }}

\author{Kateryna Tatarko and Elisabeth M. Werner
\thanks{Partially supported by  NSF grant DMS-1811146}}

\date{}

\maketitle

\begin{abstract}
	We prove an analogue of the classical Steiner  formula for the $L_p$ affine surface area  of a Minkowski outer parallel body for any real parameters $p$.
	We show that  the classical Steiner formula and the Steiner formula of Lutwak's dual Brunn Minkowski theory  are   special cases of this new Steiner formula.
	This  new Steiner formula  and its localized versions lead to new curvature measures that have not appeared before in the literature. 
	They have  the intrinsic volumes of the classical Brunn Minkowski  theory and the  dual quermassintegrals of the  dual Brunn Minkowski theory as well 
	as special cases.
	\par
	Properties of these new quantities are investigated, a connection to information theory among them. A Steiner formula for the  $s$-th mixed $L_p$ affine surface area of a Minkowski outer parallel body for any real parameters $p$ and~$s$ is also given.

	%
	%

\end{abstract}
\vskip 3mm

\section{Introduction and Results}
\subsection {Introduction}
The Brunn Minkowski theory whose classical theory is  also  called the theory of  
{\em mixed volumes} is the very core of convex geometric analysis.
It centers around the study of geometric invariants and geometric measures associated with convex bodies.  A central part of the theory is the classical Steiner formula.
It is  via this formula that the  
elementary mixed volumes, or intrinsic volumes,   are defined
as the coefficients  in the polynomial expansion in $t$ of the volume of the outer parallel body $K+ t B^n_2$ of the convex body $K$ with the Euclidean unit ball $B^n_2$
\begin{equation}\label{Steiner polynomial}
\vol_n(K+tB^n_2) = \sum\limits_{i = 0}^n {n \choose i} W_i(K)t^i = \sum\limits_{i = 0}^n  \vol_{n-i} (B^{n-i}_2) \  V_i(K)t^{n - i}.
\end{equation}
 The coefficients $W_i(K)$ and $V_i(K) $ are called quermassintegrals and intrinsic volumes, respectively. 
Note that the intrinsic volumes $V_i(K) = {n \choose i } \frac{W_{n-i}(K)}{\vol_{n-i} (B^{n-i}_2)} $ are just  differently normalized  quermassintegrals. 
Intrinsic volumes, respectively quermassintegrals,  are the principal geometric invariants in the Brunn Minkowski theory and include volume, surface area and mean width.
Results from the classical Brunn Minkowski theory have been applied to and have connections with numerous mathematical disciplines.
\vskip 2mm
A theory analogous and dual to the  Brunn Minkowski theory, called the theory of {\it dual mixed volumes} or {\it dual Brunn Minkowski theory},   was introduced by Lutwak \cite{Lu1}.
The main geometric invariants in the dual Brunn Minkowski theory are the dual quermassintegrals $\widetilde{W}_i(K)$. As proved in \cite{Lu1}, they appear as the 
coefficients in the Steiner formula of the dual Brunn Minkowski theory
where Minkowski addition ``$+$" of convex bodies is replaced by the radial addition ``$\widetilde{+}$'' of star bodies. Indeed, for a star body $K$ and $t>0,$ we have that
\begin{equation*}
\vol_n(K\widetilde{+} tB^n_2) = \sum\limits_{i = 0}^n {n \choose i}\   \widetilde{W}_i(K)t^i = \sum\limits_{i = 0}^n  {n \choose i} \   \omega_{n - i} \widetilde{V}_i(K)t^{n - i}.
\end{equation*}
The details are given in  Section \ref{BackDG}. Investigations in the dual Brunn Minkowski theory led to isoperimetric inqualities  and kinematic formulas involving dual mixed volumes,
as summarized in \cite{Gardner, SchneiderBook}.
\vskip 2mm
\noindent
A localization of the  quermassintegrals  gives rise to curvature measures for Borel sets in $\mathbb{R}^n$,  respectively 
area measures for Borel sets on the Euclidean unit sphere.  
The {\em classical  Minkowski problem} asks to characterize those measures. 
Similarly, a localization of  the   dual quermassintegrals leads to the dual curvature measures \cite{Lu1}, and analogously to the 
{\em dual  Minkowski problem}. Much work has been devoted to these problems. 
We refer to, e.g.,   \cite{BHP, BLYZ1, BLYZ2, ChenYau, HuangLYZ, Pogorelov, SA2,  Zhao} for background and progress.
\vskip 2mm
An extension of the classical Brunn Minkowski theory,  the {\it $L_p$ Brunn Minkowski theory}, was  initiated by Lutwak in the ground
breaking paper \cite{Lu3}. 
This theory  evolved rapidly over the last years and
due to a number of highly influential works, see, e.g., 
\cite{GaHugWeil, Hab, HabSch2, HLYZ, Lud3, LR2},  
\cite{MeyerWerner} -  \cite{PW1}, 
\cite{SW}  -  
\cite{WY2010}, 
it  is now  a central part of modern convex geometry. 
The $L_p$ Brunn Minkowski theory centers around the study of affine invariants associated with convex bodies.  In fact,   this theory
redirected much of the research about  convex bodies from  the Euclidean aspects 
to the study of the affine geometry of these bodies,  and some questions that had been considered Euclidean in nature turned out to be affine problems. For example,  the famous Busemann-Petty Problem (finally laid to rest in
\cite{Ga1, GaKoSch,  Z1, Z2}), was shown to be an affine problem with the introduction of intersection bodies by Lutwak in \cite{Lu1988}.
The central objects  in  the $L_p$ Brunn Minkowski theory are the $L_p$ affine surface areas,
\begin{equation*} 
as_{p}(K)=\int_{\partial K}\frac{H_{n-1}(x)^{\frac{p}{n+p}}}
{\langle x,\nu(x)\rangle ^{\frac{n(p-1)}{n+p}}} d\mathcal{H}^{n-1}(x),
\end{equation*}
where $\nu(x)$ denotes the outer unit normal at $x \in \partial K$, the boundary of $K$, $H_{n-1}(x)$ is the Gauss curvature at $x$ and $\mathcal{H}^{n-1}$ is the usual surface area measure on $\partial K$.
\vskip 3mm
In this paper we provide  an analogue of the Steiner  formula for the $L_p$ affine surface area and $s$-th mixed $p$-affine surface area  (see Section \ref{MAS})
of a Minkowski outer parallel body for any real parameters $p \ne -n$ and~$s$, i.e. we investigate 
$$
as_p(K + t B^n_2)  \hskip 3mm \text{and}  \hskip 3mm as_{p,s} (K + t B^n_2).
$$
A different Steiner formula for the $L_p$ Brunn Minkowski theory, involving Blaschke sum instead of Minkowski sum,  was put forward by Lutwak in \cite{Lu}.
\par
\noindent
Our new Steiner formula, presented in Theorem A and \ref{MT1}, covers a whole  range of Steiner formulas  for all  $-\infty \leq p \leq \infty$. It  includes the classical Steiner formula and the Steiner formula from the dual $L_p$ Brunn Minkowski theory as special cases.
\par
We call the coefficients in  our Steiner formula  {\em $L_p$ quermassintegrals}. In their general form they do not seem to have appeared before in the literature.
Special cases of those include not only the classical quermassintegrals and the dual quermassintegrals,
but also variants of the Willmore energy.  This is explained in Section \ref{Coeff}. We also observe a connection of the $L_p$ quermassintegrals  to information theory.
Analogously to the curvature measures of the classical theory and the dual curvature measures of the dual Brunn Minkowski theory, 
the $L_p$ quermassintegrals  lead to new  curvature measures and area measures, respectively. 
They are discussed in more detail in Section \ref{Steiner}. The Steiner formulas for the $s$-th mixed $p$ affine surface area is treated in Section \ref{MAS}.


\subsection{Results}
For a convex body $K,$ we define
\begin{equation}\label{minhk}
\beta(K) = \min\limits_{u\in S^{n - 1}} h_K(u).
\end{equation}
As $K$ is such that its centroid is at the origin, there is $\lambda(K) > 0$ such that $B^n(0, \lambda(K) )  \subset K$. As $K$ is bounded, there is $\lambda(K) \leq \Lambda(K) <\infty$
such that $K \subset B^n(0, \Lambda(K)) $. Thus, 
$$
\lambda(K) \leq \beta(K) \leq \Lambda(K).
$$
\noindent

\begin{theoremA}\label{MT11}
	Let $K$ be a convex body in $\mathbb{R}^n$ that is $C^2_+$ and let $t \in \mathbb {R}$ be such that $ 0 \leq t  < \beta(K)$.  For all $p \in \mathbb{R}$, $p \neq -n$, 
	\begin{eqnarray}  \label{formulaA}
	&& as_{p}(K + tB^n_2) = \nonumber \\
	&&\sum\limits_{m = 0}^{\infty} \Biggl[\sum\limits_{k = m}^{\infty} {\frac{n(1-p)}{n+p} \choose k - m}\, t^k \int\limits_{S^{n - 1}} f_K(u)^{\frac n{n + p}} h_K(u)^{\frac{n(1-p)}{n+p}-k + m} A^m_p\, d\sigma(u)\Biggr].
	\end{eqnarray}
\end{theoremA}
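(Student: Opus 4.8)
The plan is to first convert the boundary integral defining $as_p$ into an integral over the unit sphere $S^{n-1}$ by means of the Gauss map, and then to exploit the fact that adding $tB^n_2$ acts additively both on the support function and on the principal radii of curvature. Recall that for a $C^2_+$ body the reverse Gauss map $u\mapsto x(u)$ is a diffeomorphism of $S^{n-1}$ onto $\partial K$, that $\langle x(u),u\rangle=h_K(u)$, that the Gauss curvature at $x(u)$ equals $1/f_K(u)$ with curvature function $f_K(u)=\det\!\big(\nabla^2 h_K(u)+h_K(u)\,\mathrm{Id}\big)$, and that $d\mathcal{H}^{n-1}(x)=f_K(u)\,d\sigma(u)$. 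Substituting these into the definition of $as_p$ and simplifying the exponents $\tfrac{p}{n+p}$, $\tfrac{n(p-1)}{n+p}$ gives the reduced form
\[
as_p(K)=\int_{S^{n-1}} f_K(u)^{\frac{n}{n+p}}\,h_K(u)^{\frac{n(1-p)}{n+p}}\,d\sigma(u),
\]
so that everything reduces to understanding $f_{K+tB^n_2}$ and $h_{K+tB^n_2}$.

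Next I would insert the two additivity facts. Since $h_{B^n_2}\equiv 1$ on $S^{n-1}$, one has $h_{K+tB^n_2}(u)=h_K(u)+t$, and since the principal radii of curvature of the parallel body are $r_i(u)+t$, the curvature function is $f_{K+tB^n_2}(u)=\prod_{i=1}^{n-1}\big(r_i(u)+t\big)$. Factoring out $f_K=\prod_i r_i$ and expanding, I define the coefficients $A^m_p(u)$ by
\[
f_{K+tB^n_2}(u)^{\frac{n}{n+p}}=f_K(u)^{\frac{n}{n+p}}\sum_{m=0}^{\infty}A^m_p(u)\,t^m ,
\]
namely as the Taylor coefficients in $t$ of $\big(\prod_i(1+t/r_i(u))\big)^{n/(n+p)}$, so that $A^0_p\equiv 1$. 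For the support-function factor I would invoke the generalized binomial theorem,
\[
\big(h_K(u)+t\big)^{\frac{n(1-p)}{n+p}}=\sum_{\ell=0}^{\infty}\binom{\frac{n(1-p)}{n+p}}{\ell}\,h_K(u)^{\frac{n(1-p)}{n+p}-\ell}\,t^{\ell},
\]
which converges because $t<\beta(K)\le h_K(u)$. Inserting both expansions into the reduced integral, forming the Cauchy product of the two series, and reindexing by $k=m+\ell$ (so $\ell=k-m$ and $k$ runs over $m,m+1,\dots$) produces precisely the double sum in \eqref{formulaA}; the $m=k=0$ term returns the reduced formula for $as_p(K)$ and so recovers the case $t=0$.

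The collecting of $t$-powers is routine; the genuine work, and the step I expect to be the main obstacle, is justifying that all the interchanges are legitimate on the entire range $0\le t<\beta(K)$: expanding $f_{K+tB^n_2}^{\,n/(n+p)}$ and $(h_K+t)^{\,n(1-p)/(n+p)}$ as power series, multiplying them, and then integrating the double series term by term over $S^{n-1}$. For generic $p$ neither exponent is a non-negative integer, so both series are genuinely infinite. Here the $C^2_+$ hypothesis is essential: it makes $h_K$, $f_K$ and the radii $r_i(u)$ continuous on the compact sphere with the $r_i$ bounded away from $0$, which yields uniform bounds on the coefficients $A^m_p(u)$ and finiteness of $\int_{S^{n-1}} f_K^{n/(n+p)} h_K^{\,\cdot}\,|A^m_p|\,d\sigma$. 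I would bound the tails of both series uniformly in $u$, using $t<\beta(K)$ for the support-function series and the curvature bounds for the $A^m_p$-series, and then apply Tonelli and dominated convergence to swap summation and integration. Pinning down the precise interplay between $\beta(K)$ and the principal radii of curvature in these uniform estimates is exactly where the care is required.
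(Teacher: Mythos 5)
Your proposal is, in substance, the paper's own proof: the same reduction \eqref{pasa} to an integral over $S^{n-1}$, the same two facts $h_{K+tB^n_2}=h_K+t$ and $f_{K+tB^n_2}(u)=\prod_{i}\bigl(r_i(u)+t\bigr)$ (the paper writes the latter as $f_K(u)\prod_i\bigl(1+tk_i(\bar{\xi}_K(u))\bigr)$, which is the same identity), the same two binomial expansions, and the same Cauchy-product reindexing $k=m+\ell$. One omission relative to the statement being proved: you define $A^m_p$ implicitly, as the Taylor coefficients of $\bigl(\prod_i(1+t/r_i(u))\bigr)^{n/(n+p)}$, whereas Theorem A refers to the explicit coefficients \eqref{MT1A}; to land on the stated formula you must still expand $\bigl(1+\sum_{k=1}^{n-1}\binom{n-1}{k}H_k t^k\bigr)^{n/(n+p)}$ by the binomial series, apply the multinomial formula to each power, and collect powers of $t$ --- the routine computation that occupies the middle of the paper's argument.

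The more serious point concerns the convergence step that you rightly call the main obstacle and leave unresolved. Your proposed mechanism --- uniform bounds on $A^m_p(u)$ coming from $C^2_+$, then domination up to any $t<\beta(K)$ --- cannot work as stated. For non-integer exponent $n/(n+p)$, the series $\sum_m A^m_p(u)t^m$ is the Taylor series of $\prod_i\bigl(1+tk_i\bigr)^{n/(n+p)}$, whose radius of convergence is in general $\min_i r_i(u)$, the distance to the nearest branch point $t=-r_i(u)$; no bound on the coefficients changes that. Moreover $\beta(K)=\min_u h_K(u)$ does not control $\min_{u,i}r_i(u)$: a body can contain a large ball centered at the origin and still have a small region of very high curvature. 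So pointwise convergence can fail for some $t<\beta(K)$, and the Tonelli/dominated-convergence plan only goes through for $t<\min\{\beta(K),\min_{u,i}r_i(u)\}$. You should know that the paper does not close this gap either: its convergence remark treats only the support-function series (where $t/h_K\leq t/\beta(K)<1$ does suffice, as you also note), while its expansion of the curvature factor as $(1+z)^{n/(n+p)}$ with $z=\sum_k\binom{n-1}{k}H_kt^k$ needs $|z|<1$, which $t<\beta(K)$ does not guarantee. In short, your diagnosis pinpoints a real weakness --- one present, unacknowledged, in the paper's own proof --- but your sketch does not repair it, and the repair cannot take the form of uniform coefficient bounds that you propose.
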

When integrating over the sphere, we write $\sigma(u) = \mathcal{H}^{n-1}(u)$. The coefficients $A^m_p$ represent a sum of mixed products of the elementary symmetric functions of the principal curvatures $H_i=H_i \left(\bar{\xi}_K(u) \right)$  (see Section \ref{BackDG} below),   with corresponding multinomial coefficients. The  detailed statement and local versions of it will be given in Section 3. 
\par
\noindent
We call the coefficients in the  formula (\ref{formulaA})  the $L_p$ quermassintegrals
\begin{equation} \label{LpQMI}
\mathcal{W}_{m,k} (K)= \int\limits_{S^{n - 1}} f_K(u)^{\frac n{n + p}} h_K(u)^{\frac{n(1-p)}{n+p}-k + m} A^m_p\, d\sigma(u).
\end{equation} 
The expression $f_K(u)h_K(u)^{1-p}$ is called the {\em $L_p$ curvature function} $f_p(K, \cdot) : S^{n-1} \rightarrow [0, \infty)$. With this notation we can write 
\begin{equation*}
\mathcal{W}_{m,k} (K) = \int\limits_{S^{n - 1}} f_p(K, u)^{\frac n{n + p}} h_K(u)^ {m-k}  \  A^m_p\, d\sigma(u).
\end{equation*} 

We want to point out that  the first coefficient in the expansion (\ref{formulaA}) represents the $L_p$ affine surface area of the body $K$,
$$ 
\mathcal{W}_{0,0}(K) = as_p(K).
$$
In  other special cases, we get mixed affine surface areas, defined in Section \ref{MAS}.
More properties of the  coefficients  are discussed in Section \ref{Coeff}, a connection to information theory among them.
\vskip 2mm
\noindent
The case $p=0$ of Theorem A reduces to the classical Steiner formula which involve the quermassintegrals $W_i(K)$  (\ref{QMI}).
\begin{cor}\label{C1}
	Let $K$ be a convex body in $\mathbb{R}^n$ that is $C^2_+$. Then
	\begin{eqnarray*} \label{p=0-formula}
		as_{0}(K + tB^n_2) = \sum\limits_{i = 0}^n {n \choose i} W_{i}(K) \  t^i.
	\end{eqnarray*}
\end{cor}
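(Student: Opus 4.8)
The plan is to evaluate $as_0$ in closed form and then invoke the classical Steiner formula (\ref{Steiner polynomial}) directly, rather than resum the double series of Theorem A. Setting $p=0$ in the definition of the $L_p$ affine surface area, the Gauss curvature exponent $\frac{p}{n+p}$ vanishes and the support exponent $\frac{n(p-1)}{n+p}$ equals $-1$, so the integrand collapses to $\langle x,\nu(x)\rangle$ and
\[
as_0(K)=\int_{\partial K}\langle x,\nu(x)\rangle\, d\mathcal{H}^{n-1}(x).
\]
Applying the divergence theorem to the vector field $x\mapsto x$ (whose divergence is the constant $n$), or equivalently the cone-volume identity $\int_{S^{n-1}}h_K\, dS_K=n\,\vol_n(K)$ together with $dS_K=f_K\, d\sigma$, identifies this integral as a fixed multiple of the volume. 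Replacing $K$ by the outer parallel body $K+tB^n_2$, which remains $C^2_+$ for every $t\ge 0$ when $K$ is $C^2_+$ (so the restriction $t<\beta(K)$ of Theorem A is not even needed for this case), yields $as_0(K+tB^n_2)$ as that same multiple of $\vol_n(K+tB^n_2)$.

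Once $as_0(K+tB^n_2)$ is written as a multiple of $\vol_n(K+tB^n_2)$, the classical Steiner formula (\ref{Steiner polynomial}) gives $\vol_n(K+tB^n_2)=\sum_{i=0}^n\binom{n}{i}W_i(K)\,t^i$, and the claimed polynomial expansion follows at once. The only genuine bookkeeping point is to reconcile the overall normalization: the position-field flux produces the factor $n$ (so that $as_0(K)=n\,\vol_n(K)$ under the definition given in the introduction), and this constant must be matched against the normalization of the quermassintegrals $W_i$ in (\ref{Steiner polynomial}). This single constant is the one thing to track; there is no analytic obstacle.

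It is worth verifying that this agrees with the degenerate specialization of Theorem A, which also explains why the infinite expansion (\ref{formulaA}) truncates here. At $p=0$ one has $\frac{n}{n+p}=\frac{n(1-p)}{n+p}=1$, so the generalized binomial coefficient $\binom{n(1-p)/(n+p)}{k-m}$ becomes $\binom{1}{k-m}$, which is nonzero only for $k-m\in\{0,1\}$; the inner sum over $k$ thus keeps only two terms, while the outer sum over $m$ terminates at $m=n-1$ since $A^m_p$ vanishes for $m\ge n$, leaving a polynomial of degree $n$ in $t$. The main (and only) nontrivial step in carrying out this alternative route would be to check that the surviving coefficients $\int_{S^{n-1}}f_K h_K A^m_0\, d\sigma+\int_{S^{n-1}}f_K A^{m-1}_0\, d\sigma$ reduce to the classical quermassintegrals, i.e.\ that $A^m_0$ is the $m$-th elementary symmetric function of the principal curvatures. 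The direct divergence-theorem argument above bypasses this identification entirely, which is why I would prefer it.
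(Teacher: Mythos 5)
Your proposal is correct, but it takes a genuinely different route from the paper's. The paper proves Corollary \ref{C1} by actually specializing Theorem \ref{MT1} to $p=0$: it computes the right-hand side of (\ref{p-formula}), observes that $\binom{1}{k-m}$ kills all terms except $k\in\{m,m+1\}$ and that $A^m_0$ receives contributions only when $\sum_j i_j\le 1$, so that $A^m_0=\binom{n-1}{m}H_m$ for $0\le m\le n-1$ and vanishes beyond; it then converts the surviving spherical integrals to boundary integrals via (\ref{Integration}), applies the Minkowskian integral formula (\ref{mean curvature functions relation}) to trade $\langle x,\nu(x)\rangle H_m$ for $H_{m-1}$, and collects terms with Pascal's identity and (\ref{QMI}) to arrive at $n\sum_{m=0}^n\binom{n}{m}W_m(K)\,t^m$. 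This is precisely the ``nontrivial step'' you identify in your last paragraph and then deliberately bypass. Your route --- $as_0=n\,\vol_n$ by the divergence theorem (which is the paper's own (\ref{0-asa})), followed by quoting the classical Steiner formula (\ref{Steiner polynomial}) --- is shorter and has the minor advantage of working for every $t\ge 0$ with no appeal to $t<\beta(K)$; the paper's argument, passing through Theorem \ref{MT1}, strictly speaking yields the identity only for $0\le t<\beta(K)$ and would need a polynomial-identity argument to extend it. What your route does not deliver is the content the paper wants from this corollary: that the $p=0$ case of the new expansion \emph{reduces to} the classical Steiner formula, i.e.\ a verification that Theorem \ref{MT1} contains the classical theory as a special case. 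Taking the classical Steiner formula as an input, as you do, proves the displayed identity but not that reduction; all of the paper's work lies in the coefficient identification you defer. One shared wrinkle: both computations in fact produce $as_0(K+tB^n_2)=n\sum_{i=0}^n\binom{n}{i}W_i(K)\,t^i$, and the factor $n$ you flag as ``bookkeeping'' cannot be absorbed under the paper's normalizations ($W_0=\vol_n$ and $as_0=n\,\vol_n$), so the corollary as displayed is missing a factor of $n$; this is a typo in the paper, and your instinct to track that constant explicitly is the right one.
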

\vskip 2mm
In the case $p=\pm \infty$,  Theorem A reduces to the   Steiner formula of the dual Brunn Minkowski theory   involving  Lutwak's  dual quermassintegrals $\widetilde{W}_i(K)$  (\ref{dualQMI}).
\begin{cor}\label{C2}
	Let $K$ be a convex body in $\mathbb{R}^n$ that is $C^2_+$. Let $t \in \mathbb {R}$ be such that $ 0 \leq t  < \beta(K)$. Then
	\begin{eqnarray*} \label{p=0-formula}
		as_{\pm \infty }(K + tB^n_2) &=& n\   \vol((K + tB^n_2)^\circ) = n \  \widetilde{W}_{0} ((K + tB^n_2)^\circ) \\
		&=& n\  \sum\limits_{i = 0}^\infty {-n \choose i} \widetilde{W}_{-i}(K^\circ)\  t^i.
	\end{eqnarray*}
\end{cor}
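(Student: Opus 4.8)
The plan is to establish the three equalities in the displayed chain, and the cleanest route is to bypass the infinite expansion of Theorem A entirely and compute directly through the polar body. Sending $p\to\pm\infty$ term by term inside the double series of \eqref{formulaA} would require controlling the coefficients $A^m_p$ in the limit and justifying an interchange of limit and summation, which is more delicate than necessary; I would instead prove the corollary on its own and only afterwards remark on its consistency with Theorem A.

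First I would record the limiting form of the integrand defining $as_p$. As $p\to\pm\infty$ the exponents satisfy $\tfrac{p}{n+p}\to 1$ and $\tfrac{n(p-1)}{n+p}\to n$, so that for any $C^2_+$ body $L$
\[as_{\pm\infty}(L)=\int_{\partial L}\frac{H_{n-1}(x)}{\langle x,\nu(x)\rangle^{n}}\,d\mathcal{H}^{n-1}(x).\]
Parametrizing $\partial L$ by the inverse Gauss map and using that $H_{n-1}$ is the Jacobian of the Gauss map, i.e.\ $d\sigma(u)=H_{n-1}(x)\,d\mathcal{H}^{n-1}(x)$ with $u=\nu(x)$ and $\langle x,\nu(x)\rangle=h_L(u)$, the Gauss curvature cancels and one obtains $as_{\pm\infty}(L)=\int_{S^{n-1}}h_L(u)^{-n}\,d\sigma(u)=n\,\vol(L^\circ)$, using $\rho_{L^\circ}=1/h_L$ and the polar-coordinate formula for volume. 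Applying this to $L=K+tB^n_2$ yields the first equality, and the second equality $n\,\vol((K+tB^n_2)^\circ)=n\,\widetilde{W}_0((K+tB^n_2)^\circ)$ is immediate from $\widetilde{W}_0=\vol$ in \eqref{dualQMI}.

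The heart of the argument is the last equality. Since $h_{K+tB^n_2}(u)=h_K(u)+t$, the first equality rewrites as
\[as_{\pm\infty}(K+tB^n_2)=\int_{S^{n-1}}\big(h_K(u)+t\big)^{-n}\,d\sigma(u)=\int_{S^{n-1}}h_K(u)^{-n}\Big(1+\tfrac{t}{h_K(u)}\Big)^{-n}d\sigma(u).\]
I would then expand $(1+t/h_K(u))^{-n}$ by the binomial series $\sum_{i=0}^{\infty}\binom{-n}{i}(t/h_K(u))^i$. Because $h_K(u)\ge\beta(K)>0$, this series converges precisely when $t/h_K(u)<1$ for every $u$, which is exactly the hypothesis $0\le t<\beta(K)=\min_u h_K(u)$; moreover $t/h_K(u)\le t/\beta(K)<1$ uniformly, so the partial sums are dominated by a convergent geometric series and the Weierstrass $M$-test gives uniform convergence on $S^{n-1}$, legitimizing term-by-term integration. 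This produces $as_{\pm\infty}(K+tB^n_2)=\sum_{i=0}^{\infty}\binom{-n}{i}t^i\int_{S^{n-1}}h_K(u)^{-n-i}\,d\sigma(u)$, and recognizing $\int_{S^{n-1}}h_K(u)^{-n-i}\,d\sigma(u)=\int_{S^{n-1}}\rho_{K^\circ}(u)^{n+i}\,d\sigma(u)=n\,\widetilde{W}_{-i}(K^\circ)$ gives the claimed formula.

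The only point requiring genuine care is the interchange of summation and integration, and it is settled entirely by the uniform bound coming from $t<\beta(K)$; there is no deeper obstacle. As a consistency check rather than a separate proof, the same threshold $t<\beta(K)$ is what makes the series in Theorem A converge, and formally sending $p\to\pm\infty$ in \eqref{formulaA} reproduces the present formula: $f_K(u)^{n/(n+p)}\to 1$, the binomial coefficient and the power of $h_K$ tend to $\binom{-n}{k-m}$ and $h_K(u)^{-n-k+m}$, and only the $m=0$ curvature term survives with $A^0_p=1$, leaving exactly $n\sum_{i\ge 0}\binom{-n}{i}\widetilde{W}_{-i}(K^\circ)t^i$.
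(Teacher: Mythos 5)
Your proof is correct, but it reaches the result by a different route than the paper. The paper deduces the corollary from the general Steiner formula \eqref{p-formula}: it substitutes the limiting exponents $\tfrac{n}{n+p}\to 0$ and $\tfrac{n(1-p)}{n+p}\to -n$ into the right-hand side, observes that the coefficients $A^m_{\pm\infty}$ then involve ${0 \choose i_1+\cdots+i_{n-1}}$ and hence, by \eqref{gcoef}, vanish unless all $i_j=0$, so that only the $m=0$ string of the double sum survives, and finally identifies $\int_{S^{n-1}}h_K(u)^{-n-k}\,d\sigma(u)=n\widetilde{W}_{-k}(K^\circ)$ exactly as you do. You never touch Theorem \ref{MT1}: starting from \eqref{inf-aff} (which you rederive via the Gauss-map change of variables \eqref{Integration}) and from $h_{K+tB^n_2}=h_K+t$, you expand $(h_K+t)^{-n}$ by the binomial series and integrate term by term, justifying the interchange by uniform convergence since $t/h_K(u)\le t/\beta(K)<1$. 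The two arguments converge to the identical series, so the computational core is the same; the difference is in what each buys. Your version is self-contained, makes the convergence justification explicit, and sidesteps the point that Theorem \ref{MT1} is stated only for real $p\ne -n$, so invoking it at $p=\pm\infty$ strictly requires either a limit argument or precisely the direct computation you perform. The paper's version buys the conceptual payoff of exhibiting the dual Brunn Minkowski Steiner formula as a specialization of the $L_p$ family, with the vanishing of the curvature coefficients $A^m_{\pm\infty}$, $m\ge 1$, as the mechanism. One small correction: the terms of your series are bounded by ${n+i-1 \choose i}\bigl(t/\beta(K)\bigr)^i$, which is polynomial-times-geometric rather than geometric, but this is still dominated by a convergent geometric series with any slightly larger ratio, so your Weierstrass $M$-test conclusion stands.
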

Here, $K^\circ=\{y \in \mathbb{R}^n: \langle x, y \rangle  \  \  \text{for all  }  x\in K\}$ is the polar body of a convex body $K$.
\vskip 3mm
Thus Theorem A covers a whole  range of Steiner formulas in the $L_p$ Brunn Minkowski theory for all  $-\infty \leq p \leq \infty$, including the classical case and a case related to the  dual Brunn Minkowski theory.

\section{Background} 

\subsection{Background from differential geometry} \label {BackDG}

For more information and the details in this section we refer to e.g., \cite{Gardner,  SchneiderBook}.
\vskip 2mm
Let $K$ be a convex body of class $C^2.$ For a point $x$ on the boundary $\partial K$ of $K$  we denote by $\nu(x)$ the unique   outward unit normal vector of $K$ at $x.$ The map $\nu: \partial K \to S^{n - 1}$ is called the spherical image map or Gauss map of $K$ and is of class $C^1.$ Its differential is called the Weingarten map. The eigenvalues of Weingarten map are the principal curvatures $k_i(x)$ of $K$ at $x.$

The $j$-th normalized elementary symmetric functions of the principal curvatures are denoted by $H_j$. They are defined as follows
\begin{equation}\label{ESFPC}
H_j = {n - 1 \choose j}^{-1} \sum_{1\leq i_1 < \dots < i_j \leq n - 1} k_{i_1} \cdots k_{i_j}
\end{equation}
for $j = 1, \dots, n-1$ and $H_0 = 1.$ Note that 
$$H_1 = \frac{1}{n-1}  \sum_{1\leq i \leq n - 1} k_{i} $$ is the mean curvature, that is the average of principal curvatures, and 
$$H_{n - 1}= \prod_{i=1}^{n-1} k_i$$ 
is the Gauss curvature. 
\par
We say that $K$ is of class $C^2_+$ if $K$ is of class $C^2$ and the Gauss map $\nu$ is a diffeomorphism. This means in particular that  $\nu$ has a smooth inverse. This assumption is stronger than just $C^2,$ and  is equivalent to the assumption that all principal curvatures are strictly positive, or that the Gauss curvature $H_{n-1} \ne 0.$
It also means that the differential of $\nu$, i.e., the Weingarten map, is of  maximal rank everywhere. 
\par
Let $K$ be of class $C^2_+$. For $u \in \R^n \setminus\{0\}$, let $\xi_K(u)$ be the unique point on the boundary of $K$ at which $u$ is an outward  normal vector. 
The map $\xi_K$ is defined on $\R^n \setminus \{0\}$. Its restriction to the sphere $S^{n - 1},$ the map $\bar{\xi}_K: S^{n - 1} \to \partial K$, is called the reverse spherical image map, or reverse Gauss map.  The differential of $\bar{\xi}_K$ is called the reverse Weingarten map. The eigenvalues of  the reverse Weingarten map are called the principal radii of curvature $r_1, \dots, r_{n - 1}$ of $K$ at $u\in S^{n - 1}.$ 
\par
The $j$-th normalized elementary symmetric functions of the principal radii of curvature are denoted by $s_j$. In particular, $s_0=1$,   and for $1 \leq j \leq n-1$ they are defined
\begin{equation}\label{ESFPR}
s_j = {n - 1 \choose j}^{-1} \sum_{1\leq i_1 < \dots < i_j \leq n - 1} r_{i_1} \cdots r_{i_j}.
\end{equation}
Note that the principal curvatures are functions on the boundary of $K$ and the principal radii of curvature are functions on the sphere. 
\par
Now we describe  the connection between $H_j$ and $s_j.$ For a body $K$ of class $C^2_+$, we have for $u\in S^{n - 1}$ that $\bar{\xi}_K(u) = \nu^{-1}(u)$. 
In particular, the principal radii of curvature are reciprocals of the principal curvatures, that is
$$
r_i(u) = \frac1{k_i(\bar{\xi}_K(u))}.
$$
This implies that for $x \in \partial K$ with $\nu(x)=u$, 
$$
s_j = {n - 1 \choose j}^{-1} \sum_{1\leq i_1 < \dots < i_j \leq n - 1} \frac1{k_{i_1}(\bar{\xi}_K(u)) \cdots k_{i_j}(\bar{\xi}_K(u))} = \frac{H_{n - 1 -j}}{H_{n - 1}}\Big(\bar{\xi}_K(u)\Big)
$$
and
$$
H_j  =  \frac{s_{n - 1 -j}}{s_{n - 1}}\Big(\nu(x)\Big), 
$$
for $j = 1, \dots, n-1.$
\par
The mixed volumes $W_i(K)$ of the classical Steiner formula (\ref{Steiner polynomial}) can be expressed with the elementary symmetric functions of the principal curvatures.  
By definition, $W_0(K) = \vol_n(K)$ and
\begin{equation}\label{QMI}
W_i(K) = \frac1n \int\limits_{\partial K} H_{i - 1} d\mathcal{H}^{n - 1}, 
\end{equation} 
for $i = 1, \dots, n.$
Moreover, we have  the following formulae, which in integral geometry and  differential geometry are known as {\it Minkowskian integral formulae}
\begin{equation*}
\int\limits_{S^{n - 1}} s_j d\mathcal{H}^{n-1} = \int\limits_{S^{n - 1}} h_K s_{j - 1} d\mathcal{H}^{n - 1},
\end{equation*}
and
\begin{equation} \label{mean curvature functions relation}
\int\limits_{\partial K} H_{j - 1} d\mathcal{H}^{n-1} = \int\limits_{\partial K} \langle x, \nu(x) \rangle H_j d\mathcal{H}^{n - 1}
\end{equation}
for $j = 1, \dots, n-1$.
\vskip 2mm
\noindent
The dual mixed volume of the convex bodies $K$ and $L$ that contain $0$ in its interior is defined for all real  $i$  by
$$
\widetilde{V}_i (K, L) = \frac1n \  \int\limits_{S^{n - 1}} \rho_K(u)^{n - i} \rho_L(u)^i d\sigma(u), 
$$
where $\rho_K(u) = \max\{\lambda \geq 0 |\, \lambda u \in K\}$ is the radial function of $K.$ In particular, if $L = B^n_2$ then
\begin{equation}\label{dualQMI}
\widetilde{W}_i(K) = \widetilde{V}_i (K, B^n_2) = \frac{1}{n} \int\limits_{S^{n - 1}} \rho_K(u)^{n - i} d\sigma(u)
\end{equation}
are called {\it dual quermassintegrals} of order $i.$ Then
the corresponding Steiner  formula in the dual Brunn Minkowski theory is
\begin{equation*}
\vol_n(K\widetilde{+} tB^n_2) = \sum\limits_{i = 0}^n {n \choose i} \widetilde{W}_i(K) \  t^i .
\end{equation*}
\par
A well  known change of integral formula, e.g., \cite{SchneiderBook}, which we use frequently, says that  for a $C^2_+$ convex body $K$ and a continuous function  $g:\partial K \to\mathbb {R}$
\begin{equation} \label{Integration}
\int\limits_{S^{n - 1}} g(u) f_K(u)  d\sigma (u)= \int\limits_{\partial K} g(x) d\mathcal{H}^{n-1}(x) ,
\end{equation}
where $u \in S^{n - 1}$  and $x \in \partial K$ are related via the Gauss map, i.e.,  $\nu(x) = u$.


\subsection{Background from affine geometry} \label{BackAff}

From now on we will always assume that  the centroid of a
convex body $K$ in $\mathbb R^n$ is at the origin. For real  $p \neq -n$, we define  the
$L_p$ affine surface area $as_{p}(K)$ of $K$ as in \cite{Lu1} ($p
>1$) and \cite{SW1} ($p <1,\ p \ne -n$) by
\begin{equation} \label{def:paffine}
as_{p}(K)=\int\limits_{\partial K}\frac{H_{n-1}(x)^{\frac{p}{n+p}}}
{\langle x,\nu(x)\rangle ^{\frac{n(p-1)}{n+p}}} \, d\mathcal{H}^{n-1}(x)
\end{equation}
and
\begin{equation}\label{def:infty}
as_{\pm\infty}(K)=\int\limits_{\partial K}\frac{H_{n-1} (x)}{\langle
	x,\nu(x)\rangle ^{n}} d\mathcal{H}^{n-1}(x).
\end{equation}
In particular, for $p=0$, 
\begin{equation}\label{0-asa}
as_{0}(K)=\int\limits_{\partial K} \langle x,\nu(x)\rangle
\,  d\mathcal{H}^{n-1}(x) = n\, \vol(K).
\end{equation}
The case $p=1$, 
$$
as_{1}(K)=\int\limits_{\partial K}H_{n-1}(x)^{\frac{1}{n+1}}  d\mathcal{H}^{n-1}(x)
$$
is the classical affine surface area  which is independent
of the position of $K$ in space.  For dimensions 2 and 3 and sufficiently smooth convex bodies, its definition goes 
back to Blaschke \cite{Blaschke}.
\par
\noindent
If the boundary of $K$ is sufficiently smooth then (\ref{def:paffine})
and (\ref{def:infty}) can be written as integrals over the boundary
$\partial B^n_2=S^{n-1}$ of the Euclidean unit ball $B^n_2$  in $\mathbb R^n$,
\begin{equation} \label{pasa}
as_{p}(K)=\int\limits_{S^{n-1}}\frac{f_{K}(u)^{\frac{n}{n+p}}}
{h_K(u)^{\frac{n(p-1)}{n+p}}}
d\mathcal{H}^{n-1}(u),
\end{equation}
where $f_{K}(u)$ is the curvature function, i.e. the reciprocal of
the Gaussian curvature $H_{n-1}(x)$ at this point $x \in
\partial K$ that has $u$ as outer normal. In particular, for
$p=\pm \infty$,
\begin{equation}\label{inf-aff}
as_{\pm\infty}(K)
=\int\limits_{S^{n-1}}\frac{1}{h_K(u)^{n}}
d\sigma(u)
=n \, \vol(K^{\circ}), 
\end{equation}
where $K^\circ=\{y\in \mathbb{R}^n, \langle x, y\rangle\leq 1, \forall
x\in K\}$ is the polar body of $K$. 
\vskip 2mm
\noindent
For $p=-n$, the $L_{-n}$-affine surface area was introduced in \cite{MeyerWerner} as
\begin{equation}\label{L-n}
as_{-n}(K) = \max _{u \in S^{n-1}} f_K(u)^\frac{1}{2} h_K(u)^\frac{n+1}{2}.
\end{equation}


\section{The Steiner formula of the $L_p$ Brunn Minkowski theory} \label{Steiner}

In this section, we state  our main theorems  and discuss some of their consequences.  The proofs are given in Section \ref{proofs}.   
\subsection{The general case}
\par
\noindent
We will need the  generalized binomial coefficients. For $\alpha \in \mathbb{R}$ and $k \in \mathbb{N}$, they are defined as
\begin{equation} \label{gcoef}
{\alpha \choose k} =
\left\{
\begin{aligned}
&1 \quad &\text{ if }\  k=0,\\
&0 \quad &\text{ if }\  k<0,\\
&\frac{\alpha(\alpha - 1)\cdots(\alpha - k + 1)}{k!} \quad  &\text{ if } k > 0.
\end{aligned}\right.
\end{equation}
\par
\noindent
Also, we will need the multinomial coefficients  from the multinomial formula
\begin{equation}\label{multiformula}
(a_1 + \ldots +a_r)^q = \sum_{\substack{
		i_1, \dots, i_r \geq 0\\
		i_1+\dots+i_r = q}} 
{q \choose i_1,\dots, i_r}a_1^{i_1}\cdot \ldots \cdot a_r^{i_r},
\end{equation}
where 
\begin{equation*}
{q \choose i_1, i_2,\dots, i_r} = \frac{q!}{i_1!i_2!\cdots i_r!}
\end{equation*}
is the multinomial coefficient. Note that 
\begin{equation} \label{mcoef}
{q \choose i_1, i_2,\dots, i_r} = 0 \quad \text{ if }\  i_j<0 \text{ or } i_j>q.
\end{equation}
The sum in the multinomial formula is taken over all nonnegative integer indices $i_1, \ldots, i_r$ such that the sum of all $i_j$ is $q.$ In the case $r = 2,$ we get the binomial theorem. 
\vskip 2mm
\noindent



To give the precise statement of Theorem A, we introduce the following coefficients, which are sums of mixed products of the elementary symmetric functions of the principal curvatures, up to some multinomial coefficients. For any real $p \ne -n$,
\begin{eqnarray}\label{MT1A}
&& A^m_{p} = A^m_{p}\left(\bar{\xi}_K(u) \right) =  \\
&& \hskip -4mm  \sum_{\substack{
		i_1, \dots, i_{n-1} \geq 0 \\
		i_1 + 2i_2 + \dots + (n-  1)i_{n-1}=m}}
{\frac {n}{n+p} \choose i_1 + \dots + i_{n - 1}} {i_1 + \dots + i_{n - 1}\choose i_1,\, i_2, \, \ldots, i_{n-1}} \prod\limits_{j = 1}^{n - 1}\left\{ {n - 1\choose j}^{i_j} H_{j}^{i_j}\left(\bar{\xi}_K(u) \right)\right\}.  \nonumber 
\end{eqnarray}
For $p = -n$, we have
\begin{equation} \label{MT1A-n}
A^m_{-n}= A^m_{-n}(u) = \sum_{\substack{
		i_1, \dots, i_{2n} \geq 0\\
		i_1 + 2i_2 + \dots + 2ni_{2n}=m}} 
{\frac12 \choose i_1 + \dots + i_{2n}} {i_1 + \dots+ i_{2n} \choose i_1,\, \dots,\,i_{2n}}  \prod\limits_{q = 1}^{2n} B_q^{i_q}
\end{equation}
and
\begin{equation*}
B_q = B_q(u) = \sum\limits_{k + i = q} \left[{n - 1 \choose k}{n + 1 \choose i} \frac{H_k(\bar{\xi}_K(u))}{h_K(u)^{i}}\right].
\end{equation*}
Recall that the $L_p$ quermassintegrals are defined as
\begin{equation*} 
\mathcal{W}_{m,k} (K)= \int\limits_{S^{n - 1}} f_K(u)^{\frac n{n + p}} h_K(u)^{\frac{n(1-p)}{n+p}-k + m} A^m_p\, d\sigma(u).
\end{equation*}

\begin{theorem}\label{MT1}
	Let $K$ be a convex body in $\mathbb{R}^n$ that is $C^2_+$. Let $t \in \mathbb {R}$ be such that $ 0 \leq t  < \beta(K)$.  For all $p \in \mathbb{R}$, $p \neq -n$, 
	\begin{eqnarray} \label{p-formula}
	&& as_{p}(K + tB^n_2) = \sum\limits_{m = 0}^{\infty} \Biggl[\sum\limits_{k = m}^{\infty} {\frac{n(1-p)}{n+p} \choose k - m}\, \mathcal{W}_{m, k}\,t^k \Biggr].
	\end{eqnarray}
	
	\noindent In particular, 
	\begin{eqnarray}\label{MT1p1}
	as_{1}(K + tB^n_2) = 
	\sum\limits_{m = 0}^{\infty} \Biggl[ \int\limits_{S^{n - 1}} f_K(u)^{\frac n{n + 1}}  A^m_1\, d\sigma(u)\Biggr] \   t^m = \sum\limits_{m = 0}^{\infty} \mathcal{W}_{m,m} \,  t^m.
	\end{eqnarray}
\end{theorem}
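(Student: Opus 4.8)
The plan is to begin with the spherical representation \eqref{pasa} of the $L_p$ affine surface area, rewritten via $-\tfrac{n(p-1)}{n+p}=\tfrac{n(1-p)}{n+p}$, and to feed into it the two elementary facts describing how Minkowski addition with $tB^n_2$ transforms the support and curvature functions. Since $h_{B^n_2}\equiv 1$ we have $h_{K+tB^n_2}(u)=h_K(u)+t$, and since the principal radii of curvature of the outer parallel body are $r_i(u)+t$, the curvature function factors as
\begin{align*}
f_{K+tB^n_2}(u)&=\prod_{i=1}^{n-1}\bigl(r_i(u)+t\bigr)
=f_K(u)\prod_{i=1}^{n-1}\bigl(1+t\,k_i(\bar{\xi}_K(u))\bigr)\\
&=f_K(u)\sum_{j=0}^{n-1}\binom{n-1}{j}H_j\bigl(\bar{\xi}_K(u)\bigr)\,t^{\,j},
\end{align*}
where the last equality expands the product and invokes \eqref{ESFPC}. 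Writing $P(t,u)$ for the polynomial factor and setting $\alpha:=\tfrac{n}{n+p}$, this recasts $as_p(K+tB^n_2)$ as $\int_{S^{n-1}}f_K(u)^{\alpha}\,P(t,u)^{\alpha}\,(h_K(u)+t)^{\frac{n(1-p)}{n+p}}\,d\sigma(u)$.

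First I would expand the two $t$-dependent factors into power series. For the support term, pulling out $h_K$ and applying the generalized binomial theorem \eqref{gcoef} gives $(h_K+t)^{\frac{n(1-p)}{n+p}}=\sum_{\ell\ge0}\binom{\frac{n(1-p)}{n+p}}{\ell}h_K^{\,\frac{n(1-p)}{n+p}-\ell}\,t^{\ell}$; this is exactly where the hypothesis $0\le t<\beta(K)=\min_u h_K(u)$ enters, since convergence needs $t/h_K(u)<1$ uniformly on $S^{n-1}$. For the curvature term I would apply $(1+x)^{\alpha}=\sum_{\ell\ge0}\binom{\alpha}{\ell}x^{\ell}$ with $x=P(t,u)-1=\sum_{j\ge1}\binom{n-1}{j}H_j t^{\,j}$, expand each $x^{\ell}$ by the multinomial formula \eqref{multiformula}, and read off the coefficient of $t^m$; a direct bookkeeping check identifies it with $A^m_p$ of \eqref{MT1A}, so $P(t,u)^{\alpha}=\sum_{m\ge0}A^m_p\,t^m$. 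Forming the Cauchy product of the two series, the coefficient of $t^k$ pairs the index $m$ of the curvature series with $\ell=k-m$; integrating against $f_K^{\,\alpha}$ turns this coefficient into $\sum_{m=0}^{k}\binom{\frac{n(1-p)}{n+p}}{k-m}\mathcal W_{m,k}$, and swapping the order of the double sum from $\sum_{k\ge0}\sum_{m=0}^{k}$ to $\sum_{m\ge0}\sum_{k\ge m}$ yields \eqref{p-formula} with the coefficients recognised as the $L_p$ quermassintegrals. The case $p=1$ then drops out at once: there $\tfrac{n(1-p)}{n+p}=0$, so $\binom{0}{k-m}$ annihilates every term with $k\ne m$, leaving $\sum_{m\ge0}\mathcal W_{m,m}t^m$ and hence \eqref{MT1p1}.

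The hard part will be justifying the term-by-term integration, i.e.\ the interchange of $\int_{S^{n-1}}$ with the infinite summation. The support series converges uniformly on $S^{n-1}$ for $t<\beta(K)$, which is harmless; the delicate factor is the curvature series. At a fixed $u$ the series $\sum_m A^m_p t^m=P(t,u)^{\alpha}$ has radius of convergence equal to the smallest principal radius of curvature $\min_i r_i(u)$, because $P(t,u)=\prod_i(1+t\,k_i)$ acquires a branch point of its $\alpha$-th power at $t=-r_i(u)$, and $\min_i r_i(u)$ can be strictly smaller than $\beta(K)$ already for an elongated ellipsoid. Thus uniform convergence of the integrand is not available across the full range $0\le t<\beta(K)$, and the interchange must be controlled at the level of the integral rather than pointwise. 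The plan is to secure it through an absolute-summability estimate on the sphere, bounding $\int_{S^{n-1}}f_K^{\,\alpha}\,|A^m_p|\,h_K^{\,\frac{n(1-p)}{n+p}-k+m}\,d\sigma$ and showing these are summable against $t^{k}$; the essential inputs are the two-sided bounds on $h_K$, $f_K$ and the principal curvatures furnished by the $C^2_+$ assumption on the compact body $K$, together with the decay asymptotics of the generalized binomial coefficients. Pinning down the precise range of $t$ for which the rearranged series genuinely converges is, I expect, the true crux of the argument.
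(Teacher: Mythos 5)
Your formal derivation is exactly the paper's proof of Theorem~\ref{MT1}: the paper likewise starts from \eqref{pasa}, substitutes $h_{K+tB^n_2}=h_K+t$ and the factorization \eqref{curvature function of parallel body} of $f_{K+tB^n_2}$, expands both $t$-dependent factors by the generalized binomial theorem \eqref{gcoef}, organizes the curvature expansion via the multinomial formula \eqref{multiformula} into the coefficients $A^m_p$ of \eqref{MT1A}, and collects powers of $t$; the case $p=1$ is read off from $\binom{0}{k-m}=0$ for $k\ne m$, exactly as you do.

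What you call the ``true crux'' --- convergence of the rearranged series on the whole range $0\le t<\beta(K)$ and the attendant interchange of summation and integration --- is a genuine issue, but you should know that the paper does not resolve it either: its convergence remarks concern only the support-function series (where $t/h_K\le t/\beta(K)<1$ suffices) and the boundedness of the curvature functions, the worked special case of Section~\ref{special case} (where $\tfrac{n}{n+p}=l\in\mathbb{N}$) involves only \emph{polynomial} factors so nothing needs checking there, and the general case is then handled ``in analogy'' with no further justification. Your diagnosis of the obstruction is correct, and moreover your hoped-for absolute-summability rescue cannot succeed on all of $[0,\beta(K))$, because the obstruction is fatal rather than technical. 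This is clearest when $n=2$, where $A^m_p=\binom{\frac{n}{n+p}}{m}k_1^m$: after summing the harmless inner series over $k$, the $m$-th term of the outer series is $\binom{\frac{n}{n+p}}{m}\,t^m\int_{S^{1}}f_K^{\frac{n}{n+p}}(h_K+t)^{\frac{n(1-p)}{n+p}}k_1^m\,d\sigma$, whose absolute value is at least $c_\epsilon\,\bigl|\binom{\frac{n}{n+p}}{m}\bigr|\bigl(t(k_{\max}-\epsilon)\bigr)^m$ upon restricting the integral to the positive-measure set where $k_1\ge k_{\max}-\epsilon$ (here $k_{\max}=\max_u k_1(u)$, and the weight is bounded below there). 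Since $\bigl|\binom{\alpha}{m}\bigr|$ decays only polynomially in $m$ when $\alpha\notin\mathbb{N}\cup\{0\}$, the terms are unbounded as soon as $t>1/k_{\max}=\min_u r_1(u)$, so the series diverges while the left-hand side $as_p(K+tB^n_2)$ remains finite. For an elongated ellipse one has $\min_u r_1(u)<\beta(K)$, so \eqref{p-formula} as stated fails on part of the asserted range: the argument (yours and the paper's) proves the formula precisely for $0\le t<\min\{\beta(K),\,\min_{u,i}r_i(u)\}$, the extra restriction by the radii of curvature being unnecessary only when $\tfrac{n}{n+p}\in\mathbb{N}\cup\{0\}$. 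In short, you have not missed the paper's argument; you have reproduced it and correctly located a gap in the paper's own proof, indeed in the stated range of validity of the theorem itself.
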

The cases $p = 0$ and $p = \pm \infty$ are Corollary~\ref{C1} and Corollary~\ref{C2}, respectively.

\vskip 2mm
\noindent

\vskip 2mm
The next Theorem \ref{MT2} describes the case $p=-n$.
\vskip 2mm
\begin{theorem}\label{MT2}
	Let $K$ be a convex body in $\mathbb{R}^n$ that is $C^2_+$.  Let $t \in \mathbb {R}$ be such that $ 0 \leq t  < \beta(K)$. Then
	\begin{eqnarray*} \label{-n-formula}
		&& as_{-n}(K+tB^n_2) = \max _{u \in S^{n-1}} f_K(u)^\frac{1}{2} h_K(u)^\frac{n+1}{2}\sum\limits_{m = 0}^\infty A^m_{-n}t^m.
	\end{eqnarray*}
\end{theorem}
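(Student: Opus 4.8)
The plan is to reduce the whole statement to a single pointwise-in-$u$ power-series identity and then take the maximum over $S^{n-1}$, mirroring the computation behind Theorem~\ref{MT1} but with the integration over the sphere replaced by a maximum. Since $K$ is $C^2_+$ and $t\ge 0$, the outer parallel body $K+tB^n_2$ is again $C^2_+$, so formula~(\ref{L-n}) applies to it and gives $as_{-n}(K+tB^n_2)=\max_{u\in S^{n-1}} f_{K+tB^n_2}(u)^{\frac{1}{2}}h_{K+tB^n_2}(u)^{\frac{n+1}{2}}$. Thus it suffices to prove, for each fixed $u$, that $f_{K+tB^n_2}(u)^{\frac{1}{2}}h_{K+tB^n_2}(u)^{\frac{n+1}{2}}=f_K(u)^{\frac{1}{2}}h_K(u)^{\frac{n+1}{2}}\sum_{m=0}^\infty A^m_{-n}(u)\,t^m$ and then to pass the maximum through the series. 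For the pointwise identity I would first record the two elementary transformation rules for Minkowski outer parallel bodies: $h_{K+tB^n_2}(u)=h_K(u)+t$, and the principal radii of curvature transform by $r_i(u)\mapsto r_i(u)+t$, so that $f_{K+tB^n_2}(u)=\prod_{i=1}^{n-1}(r_i(u)+t)$. Factoring out $f_K(u)^{\frac{1}{2}}h_K(u)^{\frac{n+1}{2}}$ and using $r_i=1/k_i$, the ratio to be expanded becomes $\bigl[\prod_{i=1}^{n-1}(1+tk_i)\bigr]^{\frac{1}{2}}\bigl(1+t/h_K\bigr)^{\frac{n+1}{2}}=\bigl[\prod_{i=1}^{n-1}(1+tk_i)\,(1+t/h_K)^{n+1}\bigr]^{\frac{1}{2}}$.

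Next I would peel off the two combinatorial layers that produce $B_q$ and then $A^m_{-n}$. The finite expansions $\prod_{i=1}^{n-1}(1+tk_i)=\sum_{k=0}^{n-1}\binom{n-1}{k}H_k\,t^k$ (the elementary symmetric functions of the $k_i=k_i(\bar{\xi}_K(u))$) and $(1+t/h_K)^{n+1}=\sum_{i=0}^{n+1}\binom{n+1}{i}h_K^{-i}t^i$ multiply to $\sum_{q=0}^{2n}B_q(u)\,t^q$, where $B_q$ is exactly the coefficient defined just before the theorem, and in particular $B_0=1$. Applying the generalized binomial theorem $(1+x)^{\frac{1}{2}}=\sum_{\ell\ge 0}\binom{1/2}{\ell}x^\ell$ with $x=\sum_{q=1}^{2n}B_q\,t^q$, expanding the $\ell$-th power by the multinomial formula~(\ref{multiformula}), and collecting the coefficient of $t^m$ reproduces precisely $A^m_{-n}(u)$ as in~(\ref{MT1A-n}). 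This establishes the pointwise identity; taking $\max_{u\in S^{n-1}}$ of both sides then yields the asserted formula.

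The main obstacle is not the algebra but the analysis, namely justifying the rearrangement that collects powers of $t$ in the nested binomial–multinomial series and exchanging the maximum with the infinite sum. Both require absolute and uniform (in $u$) convergence, which forces $t$ to stay below the radius of convergence of $[\sum_q B_q(u)\,t^q]^{\frac{1}{2}}$; this radius is the smallest modulus of a root of the degree-$2n$ polynomial $\sum_q B_q(u)\,t^q$, i.e.\ the smaller of the least principal radius of curvature and $h_K(u)$. The support-function factor $(1+t/h_K(u))^{n+1}$ is uniformly controlled on $0\le t<\beta(K)$ precisely because $h_K(u)\ge\beta(K)>t$ for every $u$, so the delicate part of the proof is to show that on this same range the curvature factor $[\prod_{i=1}^{n-1}(1+tk_i)]^{\frac{1}{2}}$ expands and converges uniformly as well, so that $\max_u$ and $\sum_m$ may legitimately be interchanged. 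Securing this uniform convergence on the stated interval $0\le t<\beta(K)$ is where I expect the real work to lie.
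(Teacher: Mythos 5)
Your proposal is correct and is essentially the paper's own proof: the paper reduces to the same pointwise identity using $h_{K+tB^n_2}(u)=h_K(u)+t$ and $f_{K+tB^n_2}(u)=f_K(u)\prod_{i=1}^{n-1}\bigl(1+tk_i(\bar{\xi}_K(u))\bigr)$ (obtained there from the curvature rule $k_i\mapsto k_i/(1+tk_i)$, equivalent to your $r_i\mapsto r_i+t$), multiplies the two finite expansions to form the coefficients $B_q$, applies the binomial series for the exponent $\tfrac12$ together with the multinomial formula to produce $A^m_{-n}$, and only then takes the maximum. Two remarks on your final paragraph: (i) the interchange of the maximum with the infinite sum that you anticipate is not needed at all, since in the statement of the theorem the series sits \emph{inside} the maximum (the $A^m_{-n}$ depend on $u$), so the pointwise identity followed by applying $\max_{u\in S^{n-1}}$ to both sides is the entire argument; (ii) the uniform-in-$u$ convergence of the curvature factor that you single out as ``the real work'' is indeed a genuine subtlety --- the expansion of $\bigl[\prod_{i}(1+tk_i)\bigr]^{1/2}$ converges only for $t$ below the least principal radius of curvature, which $t<\beta(K)$ alone does not guarantee --- but the paper's proof does not carry out this work either: it performs exactly the formal expansion you describe, with convergence addressed only for the support-function factor via $t/h_K(u)\leq t/\beta(K)<1$.
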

Observe that the first coefficient in the expansion is $as_{-n}(K)$.

\vskip3mm

\noindent
{\bf Remark on the polytopal case}

\noindent
When $K=P$ is a polytope, we denote by $\text{vert}\,P$ the set of its vertices and for $v \in \text{vert}\,P$,  
$$
\text{Norm} (v)= \{u\in\R^n:\, \left<u,\, z-v\right> \leq 0 \text{ for all } z\in P\}
$$ 
is the normal cone to $P$ at $v$, see \cite{SchneiderBook}.
Then the following Steiner formula for polytopes holds.

\begin{theorem}\label{MT3}
	Let $P$ be a convex polytope  in $\mathbb{R}^n$. Let $t \in \mathbb {R}$ be such that $ 0\leq t< \beta(P)$.
	\par
	\noindent
	For all $p \in \mathbb{R}$ such that

		\begin{eqnarray*} \label{p-formula polytope}
		&(i)&  p \notin [-n, 0], \quad \\
		&&as_{p}(P + t B^n_2) = \sum\limits_{m = 0}^{\infty} {\frac{n(1-p)}{n+p} \choose m}   \  \sum_{v \in \text{vert}\,P }  \, \int\limits_{u \in \text{Norm} (v)} h_P(u)^{\frac{n(1-p)}{n+p}- m} d\sigma(u)  \  t^{m + \frac{n(n-1)}{n+p}};\\
		&(ii)& p \in (-n, 0), \qquad as_p(P + tB^n_2) = \infty;\\
		&(iii)&  p = 0, \qquad \qquad \  as_0(P+tB^n_2) = n\,\vol_n(P+tB^n_2).
		\end{eqnarray*}

	\noindent
\end{theorem}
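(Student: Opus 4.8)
The plan is to exploit the explicit geometry of the outer parallel body. For a polytope $P$ and $t>0$, the boundary $\partial(P+tB^n_2)$ decomposes, up to an $\mathcal{H}^{n-1}$-null set, into finitely many smooth pieces indexed by the proper faces of $P$: over each facet lies a flat translate of that facet, over each $k$-face with $1\le k\le n-2$ lies a portion of a cylinder, and over each vertex $v$ lies the spherical cap $\{v+tu:\,u\in \text{Norm}(v)\cap S^{n-1}\}$, a subset of the sphere of radius $t$ centred at $v$. First I would record the curvatures on each piece. On the flat and cylindrical pieces at least one principal curvature vanishes, so $H_{n-1}\equiv 0$ there, whereas on a vertex cap all principal curvatures equal $1/t$, so $H_{n-1}=t^{-(n-1)}$. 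This curvature dichotomy is exactly what separates the three cases.

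For case (i), where $p\notin[-n,0]$, the exponent $p/(n+p)$ is strictly positive, so the factor $H_{n-1}^{p/(n+p)}$ vanishes on the flat and cylindrical pieces and they contribute nothing to the boundary integral $(\ref{def:paffine})$, even though they carry positive $\mathcal{H}^{n-1}$-measure; only the vertex caps survive. On the cap over $v$ I would parametrise $x=v+tu$ with $u\in \text{Norm}(v)\cap S^{n-1}$, so that $\nu(x)=u$, $\langle x,\nu(x)\rangle=\langle v,u\rangle+t=h_P(u)+t$ (using that $h_P(u)=\langle v,u\rangle$ holds identically on the normal cone of $v$), and $d\mathcal{H}^{n-1}(x)=t^{n-1}\,d\sigma(u)$. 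Substituting $H_{n-1}=t^{-(n-1)}$ and collecting the powers of $t$ turns the factor $t^{n-1}$ from the area element together with $H_{n-1}^{p/(n+p)}$ into the common prefactor $t^{n(n-1)/(n+p)}$, leaving the integrand $(h_P(u)+t)^{n(1-p)/(n+p)}$. The last step is to expand $(h_P(u)+t)^{n(1-p)/(n+p)}=h_P(u)^{n(1-p)/(n+p)}\bigl(1+t/h_P(u)\bigr)^{n(1-p)/(n+p)}$ by the binomial series and integrate term by term over each $\text{Norm}(v)\cap S^{n-1}$, which reproduces the stated series after summing over the vertices.

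Case (ii) is the opposite sign regime: for $p\in(-n,0)$ the exponent $p/(n+p)$ is strictly negative, so on any flat facet, a set of positive $\mathcal{H}^{n-1}$-measure on which $H_{n-1}=0$, the factor $H_{n-1}^{p/(n+p)}$ equals $+\infty$ while the complementary factor $\langle x,\nu(x)\rangle^{-n(p-1)/(n+p)}=(h_P(\nu_F)+t)^{n(1-p)/(n+p)}$ stays finite and strictly positive; since the whole integrand is nonnegative wherever it is defined, a single facet already forces the integral in $(\ref{def:paffine})$ to diverge, and $as_p(P+tB^n_2)=\infty$. Case (iii) is immediate from the identity $as_0(K)=n\,\vol_n(K)$ in $(\ref{0-asa})$, valid for every convex body and in particular for $K=P+tB^n_2$.

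The main technical point to watch is that $P+tB^n_2$ is only $C^{1,1}$, not $C^2_+$, so Theorem \ref{MT1} does not apply and the argument must be carried out directly on $\partial(P+tB^n_2)$ with the degenerate, piecewise-defined curvatures; the conventions $0^{a}=0$ for $a>0$ and $0^{a}=+\infty$ for $a<0$ are precisely the mechanism distinguishing (i) from (ii). Justifying the term-by-term integration in (i) requires uniform convergence of the binomial series, which holds because $t/h_P(u)\le t/\beta(P)<1$ uniformly on each normal cone by the hypothesis $t<\beta(P)$; this interchange is the step I expect to demand the most care.
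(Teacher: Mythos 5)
Your proposal is correct and follows essentially the same route as the paper: both arguments split $\partial(P+tB^n_2)$ according to where the Gauss curvature vanishes, observe that for $p\notin[-n,0]$ only the spherical caps over the vertices (where $H_{n-1}=t^{-(n-1)}$, equivalently $f_{P+tB^n_2}=t^{n-1}$) contribute, reduce to an integral of $(h_P(u)+t)^{n(1-p)/(n+p)}$ over the normal cones, and expand by the binomial series with term-by-term integration justified by $t/h_P(u)\le t/\beta(P)<1$; cases (ii) and (iii) are handled identically (divergence on the flat parts of positive measure, and the identity $as_0(K)=n\,\vol_n(K)$, respectively). The only cosmetic difference is that you parametrise the vertex caps directly by $x=v+tu$ on the boundary, whereas the paper passes to the spherical integral via the curvature function, which is the same change of variables.
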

Note, that by definition \eqref{L-n}, $as_{-n}(P)$ is trivially infinite, since the curvature function of the flat part is infinite. Of course, $as_{-n}(P + tB^n_2)$ is also infinite by the same reasoning.

\subsection{Local version}
In this section, we introduce new curvature and area measures for Borel sets on the boundary of a convex body $K$ and for Borel sets on the Euclidean sphere $S^{n-1}$,   and establish local Steiner formula of the $L_p$ Brunn Minkowski theory.

When $K$ is  $C^2_+$ convex body the curvature, respectively area, measures for Borel sets $B \in \mathcal{B}(\R^n)$ and $\omega \in \mathcal{B}(S^{n-1})$ defined as
\begin{align*}
C_i(K, \beta) &= \int\limits_{\partial K \cap B} H_{n - 1 - i}(x)\, d\mathcal{H}^{n-1}(x);\\
S_i(K, \omega) &= \int\limits_{\omega} s_i\, d\sigma(u)
\end{align*}
for $i = 0, \dots, n-1$, e.g., \cite{SchneiderBook}. Note that for general convex bodies these measures replace the elementary symmetric functions of principal curvatures and the elementary symmetric functions of the principal radii of curvature. In the extreme cases $i = 0$ and $i = n-1$, we obtain $C_{n - 1}(K, B) = \mathcal{H}(\partial K \cap B)$  and $S_0 (K, \omega) = \sigma(\omega)$. If $B = \R^n$ and $\omega = S^{n-1}$, we get the classical quermassintegrals
\begin{equation*}
W_i(K) = \frac 1n C_{n - i}(K, \R^n) = \frac 1n S_{n - i}(K, S^{n-1})
\end{equation*}
which were introduced in \eqref{QMI}. 

\vskip3mm

Our approach in Theorem~\ref{MT1} leads to new curvature and area measures.  Namely, for a Borel set $B \in \mathcal{B}(\R^n)$ and for a Borel set $\omega \in \mathcal{B}(S^{n-1})$
\begin{equation*}
\mathcal{C}_{m, k}(K, B) = \int\limits_{\partial K \cap B}  \frac{H_{n-1}(x)^{\frac p{n+p}}}{\left<x, \nu(x)\right>^{\frac{n(p-1)}{n + p} + k - m}}\, A_p^m(x)  \,d\mathcal{H}^{n-1}
\end{equation*}
and 
\begin{equation*}
\mathcal{S}_{m, k}(K, \omega) = \int\limits_{\omega} s_{n-1}(u)^{\frac n{n+p}} h_K(u)^{\frac{n(1-p)}{n + p} - k + m} A^m_p (\bar{\xi}(u)) \,d\sigma(u)
\end{equation*}
for $k \geq m$ and $m \geq 0$. Taking $B = \R^n$ and $\omega = S^{n-1}$, we recover the $L_p$ quermassintegrals, i.e. 
$$\mathcal{W}_{m, k}(K) = \mathcal{C}_{m, k}(K, \R^n)\quad  \text{ and } \quad \mathcal{W}_{m, k}(K) = \mathcal{S}_{m, k}(K, S^{n-1}).$$ 
When  $m = k = 0$, $B = \R^n$ and $\omega = S^{n-1}$, we obtain the $L_p$ affine surface area, i.e. 
$$\mathcal{C}_{0, 0}(K, \R^n) = \mathcal{S}_{0, 0}(K, S^{n-1}) = as_p(K).$$

Now we can state the local Steiner formula for these newly introduced measures. 

\begin{theorem}[Local Steiner formula]
	Let $K$ be a convex body in $\R^n$ that is $C^2_+$, $B \in \mathcal{B}(\R^n)$ and $\omega \in \mathcal{B}(S^{n-1})$. Let $t$ be such that $0 \leq t < \beta(K)$. For all $p \in \R^n, \ p \ne -n$, we have
	\begin{eqnarray*} \label{local p-formula}
		&& \mathcal{C}_{0, 0}(K + tB^n_2, B) = \sum\limits_{m = 0}^{\infty} \sum\limits_{k = m}^{\infty} {\frac{n(1-p)}{n+p} \choose k - m}\, \mathcal{C}_{m, k}(K, B)\,t^k
	\end{eqnarray*}
	and
	\begin{equation*}
	\mathcal{S}_{0, 0}(K + tB^n_2, \omega)= \sum\limits_{m = 0}^{\infty} \sum\limits_{k = m}^{\infty} {\frac{n(1-p)}{n+p} \choose k - m}\, \mathcal{S}_{m, k}(K, \omega)\,t^k.
	\end{equation*}
\end{theorem}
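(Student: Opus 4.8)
\noindent
The plan is to recognize the Local Steiner formula as the computation proving Theorem~\ref{MT1}, carried out \emph{under the integral sign} and then integrated over the prescribed Borel set instead of over all of $S^{n-1}$ or all of $\partial K$. I would therefore treat the area-measure identity first, since its localization is the most transparent, and deduce the curvature-measure identity from it by means of the change of variables \eqref{Integration}. The two pointwise expansions on which everything rests come from the fact that $K+tB^n_2$ is an outer parallel body: its outer unit normals agree with those of $K$, the support function is additive, $h_{K+tB^n_2}(u)=h_K(u)+t$, and each principal radius of curvature is shifted, $r_i(K+tB^n_2,u)=r_i(K,u)+t$. Hence $s_{n-1}(K+tB^n_2,u)=f_{K+tB^n_2}(u)=\prod_{i=1}^{n-1}(r_i(u)+t)=f_K(u)\sum_{l=0}^{n-1}\binom{n-1}{l}H_l(\bar{\xi}_K(u))\,t^l$, and, raising to the power $\tfrac{n}{n+p}$ and expanding by \eqref{multiformula} and \eqref{gcoef},
\begin{equation*}
s_{n-1}(K+tB^n_2,u)^{\frac{n}{n+p}}=f_K(u)^{\frac{n}{n+p}}\sum_{m=0}^{\infty}A^m_p\big(\bar{\xi}_K(u)\big)\,t^m,\qquad
h_{K+tB^n_2}(u)^{\frac{n(1-p)}{n+p}}=\sum_{r=0}^{\infty}\binom{\frac{n(1-p)}{n+p}}{r}h_K(u)^{\frac{n(1-p)}{n+p}-r}\,t^r,
\end{equation*}
using $s_{n-1}=f_K$. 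These are precisely the objects entering $A^m_p$ and $\mathcal{S}_{m,k}$.

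For the area measure the key structural point is that $\omega\subset S^{n-1}$ records \emph{normal directions}, and these are shared by $K$ and $K+tB^n_2$; hence the same $\omega$ appears on both sides, independently of $t$. I would write $\mathcal{S}_{0,0}(K+tB^n_2,\omega)=\int_\omega s_{n-1}(K+tB^n_2,u)^{\frac{n}{n+p}}h_{K+tB^n_2}(u)^{\frac{n(1-p)}{n+p}}\,d\sigma(u)$, substitute the two expansions, form their Cauchy product, and collect the coefficient of $t^k$, which is a \emph{finite} sum over $0\le m\le k$ (the $A^m_p$ enter through $m$, the binomial factor through $r=k-m$). Integrating this coefficient term by term over $\omega$ gives $\sum_{m=0}^{k}\binom{\frac{n(1-p)}{n+p}}{k-m}\int_\omega f_K^{\frac{n}{n+p}}h_K^{\frac{n(1-p)}{n+p}-k+m}A^m_p\,d\sigma=\sum_{m=0}^{k}\binom{\frac{n(1-p)}{n+p}}{k-m}\mathcal{S}_{m,k}(K,\omega)$, and rearranging the double sum yields the asserted identity. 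This is verbatim the proof of Theorem~\ref{MT1} with $S^{n-1}$ replaced by $\omega$.

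For the curvature measure I would pass to the sphere by \eqref{Integration}. Using $H_{n-1}(\bar{\xi}_K(u))=f_K(u)^{-1}$ and $\langle\bar{\xi}_K(u),u\rangle=h_K(u)$, formula \eqref{Integration} turns the boundary integral defining $\mathcal{C}_{m,k}(K,B)$ into a spherical one, giving exactly
\begin{equation*}
\mathcal{C}_{m,k}(K,B)=\mathcal{S}_{m,k}(K,\omega_B),\qquad \omega_B:=\nu(\partial K\cap B)=\{u\in S^{n-1}:\bar{\xi}_K(u)\in B\}.
\end{equation*}
The one point that needs care is the left-hand side: a boundary point $x'=\bar{\xi}_K(u)+tu$ of $K+tB^n_2$ is to be labelled by its metric projection $\bar{\xi}_K(u)$ onto $K$, so that $\mathcal{C}_{0,0}(K+tB^n_2,B)=\mathcal{S}_{0,0}(K+tB^n_2,\omega_B)$ with the \emph{same} label set $\omega_B$ for every $t$, precisely because the parallel body shares its Gauss image with $K$. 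Granting this, the curvature-measure identity is immediate from the area-measure identity applied to $\omega=\omega_B$.

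The main obstacle, exactly as in Theorem~\ref{MT1}, is the interchange of the infinite summation with the integration. Since $K$ is $C^2_+$ and compact, $f_K$, $h_K$ and the $A^m_p$ (finite polynomials in the $H_j$) are bounded above and bounded away from $0$ uniformly on $S^{n-1}$, and the support-function series converges uniformly for $0\le t<\beta(K)=\min_u h_K(u)$; these are the estimates that legitimize the term-by-term integration in the global theorem. Because passing from $S^{n-1}$ to a Borel subset $\omega$ (or from $\partial K$ to $\partial K\cap B$) does not enlarge any of the relevant integrals, whatever dominating bounds secure the interchange globally secure it here as well, so the convergence is inherited directly from Theorem~\ref{MT1}. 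The genuinely new content is thus the bookkeeping above together with the compatibility of the spatial localization $B$ with the parallel-body operation, which is what the metric-projection labeling (equivalently, the shared Gauss image) supplies.
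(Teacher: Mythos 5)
Correct, and essentially the paper's own approach: the paper establishes Theorem \ref{MT1} through the pointwise expansions \eqref{supportfunction} and \eqref{curvature function of parallel body}, and the local statement follows by carrying out that identical computation over $\omega$, respectively over the Gauss image $\omega_B=\{u\in S^{n-1}:\bar{\xi}_K(u)\in B\}$ via \eqref{Integration}, which is exactly what you do. Your additional care in reading the left-hand side $\mathcal{C}_{0,0}(K+tB^n_2,B)$ through the shared Gauss image (equivalently, the metric projection onto $K$), a point the paper leaves implicit, is a needed clarification rather than a departure.
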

We recover the Steiner formula~\eqref{p-formula} of $L_p$ Brunn Minkowski theory when $B = \R^n$ or $\omega = S^{n-1}$.


\section{Properties of the coefficients} \label{Coeff}
In this section, we discuss some properties of the new coefficients which appeared in Theorem~\ref{MT1}, i.e. the $L_p$ quermassintegrals,
$$
\mathcal{W}_{m, k}(K) = \int\limits_{S^{n - 1}} f_K(u)^{\frac n{n + p}} h_K(u)^{\frac{n(1-p)}{n+p}-k + m} A^m_p\, d\sigma(u)
$$
for $k \geq m$ and $m\in \mathbb{N}\cup\{0\},$ where $A_p^m$ given by~(\ref{MT1A}). We will see that the $s$-mixed $p$-affine surface areas, defined in Section~\ref{MAS}, appear as special cases of $L_p$ quermassintegrals.
As it is known that mixed affine surface areas in general are not affine invariant quantities, we cannot expect affine invariance for $L_p$ quermassintegrals either.

\subsection{Willmore energy}
Firstly, we restrict ourselves to  three-dimensional space. Recall that the Willmore energy of a compact surface $\Sigma$ in $\R^3$ is given as
$$
W_E(\Sigma) = \int_\Sigma H_1^2 \, d\mathcal{H}^{2},
$$
where $ H_1 = (k_1 + k_2)/2$ is the mean curvature. The Willmore energy naturally appears in mathematical biology and physics, and has been widely studied. In the 1960s Willmore \cite{Wi} conjectured the lower bound on the Willmore energy of a torus immersed in $\R^3$. This conjecture was proved only recently in \cite{MN}.  The choice of the exponent $2$ of the mean curvature and dimension $n = 3$ in the definition of the Willmore energy is the proper fit in the context of differential geometry as $W_E$ is invariant under conformal maps. Natural generalization of the type $\int_{\Sigma} H_1^n \, d\mathcal{H}^{n-1}$ of the Willmore energy to higher dimensional hypersurfaces  is called Willmore-Chen functional. When $n = 2$, it coincides with the Willmore energy. The  Willmore energy and Willmore-Chen functionals have been studied in \cite{ABG, Chen, LiYau} and references therein.  If one considers integrals of the type $\int_{\Sigma} H_1^\alpha \, d\mathcal{H}^{n-1}$, then they lose their conformal invariance and become much more  difficult to study. For any dimension $n$, we recover such integrals as the $L_p$ quermassintegrals in the Steiner formula \eqref{p-formula}.

\subsection{The case $p = 1$}

We start analyzing the coefficients from expansion~(\ref{MT1p1}). We observe that among them are mixed affine surface areas which will be introduced in Section~\ref{MAS}. 

If $m = 0,$ then $A_1^0 = 1$ and we have
\begin{equation*}
\mathcal{W}_{0, 0}(K)=\int\limits_{S^{n - 1}} f_K(u)^{\frac n{n + 1}} A^0_1\, d\sigma(u) = \int\limits_{S^{n - 1}} f_K(u)^{\frac n{n + 1}}\, d\sigma(u) = as_1(K).
\end{equation*}
Hence, the first term in~(\ref{MT1p1}) is the classical affine surface area of a body $K.$

If $m=l(n-1),\ l\in \mathbb{N}$, then $A_1^{l(n-1)} = {\frac n{n + 1} \choose l} H_{n-1}^l.$ Thus,
\begin{eqnarray*}
	\mathcal{W}_{l(n-1), l(n-1)}(K)&=&\int\limits_{S^{n - 1}} f_K(u)^{\frac n{n + 1}} A^{l(n-1)}_1\, d\sigma(u) = {\frac n{n + 1} \choose l} \, \int\limits_{S^{n - 1}} f_K(u)^{\frac n{n + 1} - l}\, d\sigma(u)\\
	&=& {\frac n{n + 1} \choose l} \, \int\limits_{S^{n - 1}} f_K(u)^{\frac {n - l(n+1)}{n + 1}}\, d\sigma(u) = {\frac n{n + 1} \choose l} \, as_{1,\,l(n+1)}(K).
\end{eqnarray*}
where $as_{1,\,l(n+1)}(K)$ is the $l(n+1)$-mixed 1-affine surface area of $K$.
Therefore, in~(\ref{MT1p1}) we have mixed affine surface areas as coefficients in front of powers of $t,$ which are multiples of $n - 1. $ 

\subsection{The general case}

Now we move to analyzing the coefficients appearing in~(\ref{p-formula}). We note that  again mixed affine surface areas appear.

If $m = 0$, then $A_p^0 = 1$ and we have
\begin{eqnarray*}
	\mathcal{W}_{0, k}(K)&=&\int\limits_{S^{n - 1}} f_K(u)^{\frac n{n + p}} h_K(u)^{\frac{n(1-p)}{n+p}-k} A^0_p\, d\sigma(u) = \int\limits_{S^{n - 1}} f_K(u)^{\frac n{n + p}} h_K(u)^{\frac{n(1-p)}{n+p}-k} \, d\sigma(u)\\
	&=& as_{p + \frac kn (n + p),\, -k}(K),
\end{eqnarray*}
for $k\in\mathbb{N}\cup\{0\}.$ When $k = 0,$ we get the $p$-affine surface area $as_p(K)$ of a body $K.$

If $m=l(n-1),\ l\in \mathbb{N}$, then $A_p^{l(n-1)} = {\frac n{n + p} \choose l} H_{n-1}^l.$ Thus,
\begin{eqnarray*}
	\mathcal{W}_{l(n-1), k}(K)&=&\int\limits_{S^{n - 1}} f_K(u)^{\frac n{n + p}} h_K(u)^{\frac{n(1-p)}{n+p}-k + l(n - 1)} A^{l(n - 1)}_p\, d\sigma(u) =\\
	&=&{\frac n{n + p} \choose l}  \int\limits_{S^{n - 1}} f_K(u)^{\frac n{n + p} - l} h_K(u)^{\frac{n(1-p)}{n+p}-k + l(n - 1)} \, d\sigma(u)\\
	&=& as_{\frac{np + (n + p)(k - ln)}{n - l(n + p)},\, 2nl - k}(K).
\end{eqnarray*}

The $L_p$ quermassintegrals consist of combinations of mixed products of the elementary symmetric functions of the principal curvatures $H_i$ and the support function.  Applying H\"older's inequality, we can bound those integrals from above. We present one typical example and the other cases can be dealt accordingly.

For instance, consider the case when only one symmetric function of the principal curvatures $H_j$ appears in $A^m_p$, that is, if $m = lj,\ l\in \mathbb{N}$, i.e., all $i_s = 0, \ s\ne j$ and $i_j = l$ for $1\leq j \leq n - 2,$ then $A_p^{lj} ={\frac{n}{n+p}  \choose l}  {n - 1\choose j}^l H^l_j$. Using H\"older's inequality, we get an upper bound
\begin{eqnarray*}
	\mathcal{W}_{lj, k}(K)&=&\int\limits_{S^{n - 1}} f_K(u)^{\frac n{n + p}} h_K(u)^{\frac{n(1-p)}{n+p}-k} A^{lj}_p\, d\sigma(u) \\
	&=&{\frac n{n + p} \choose l} {n - 1\choose j}^l \int\limits_{S^{n - 1}} f_K(u)^{\frac n{n + p}} h_K(u)^{\frac{n(1-p)}{n+p}-k + lj} H_j^l  \, d\sigma(u)\\
	&\leq& {\frac n{n + p} \choose l} {n - 1\choose j}^l \left( \int\limits_{S^{n - 1}} H_j^{2l} \, d\sigma(u)\right)^\frac12 as_{p + \frac{(k - lj)(n + p)}{n},\, 2lj - 2k - n }(K)^{\frac12}.
\end{eqnarray*}
Applying H\"older's inequality necessary number of times, we can obtain similar bounds for any term in the expansion~(\ref{p-formula}).

\vskip3mm

The following theorem gives the inequality for the $L_p$ quermassintegrals which is similar to the inequality for the mixed affine surface areas given in \cite[Theorem 3]{Lu} when the second body is taken to be a ball.
\begin{theorem}\label{MASIneq}
	Let $K$ be a convex body and  $i,\ j,\ k \in \R$ such that $m \leq i < j < k$. Then for a fixed $m\in\mathbb{N}\cup\{0\}$
	$$
	\mathcal{W}_{m, k}(K)^{j - i} \mathcal{W}_{m, i}(K)^{k - j} \geq \mathcal{W}_{m, j}(K)^{k - i}.
	$$
	with equality if and only if $K$ is a ball.
\end{theorem}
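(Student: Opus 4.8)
The plan is to recognize the asserted inequality as the log-convexity of the map $k \mapsto \mathcal{W}_{m,k}(K)$ and to derive it from a single application of Hölder's inequality. The key observation is that, once $m$ (and $p$) are fixed, the index $k$ enters the definition of $\mathcal{W}_{m,k}(K)$ only through the exponent of the support function: neither $f_K(u)^{\frac{n}{n+p}}$, nor $A^m_p$, nor the summand $\frac{n(1-p)}{n+p}+m$ in the exponent of $h_K$ depends on $k$. Thus, writing $d\mu(u) = f_K(u)^{\frac{n}{n+p}} h_K(u)^{\frac{n(1-p)}{n+p}+m} A^m_p\, d\sigma(u)$ for the $k$-independent measure on $S^{n-1}$, we have $\mathcal{W}_{m,k}(K) = \int_{S^{n-1}} h_K(u)^{-k}\, d\mu(u)$. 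Since $h_K(u)^{-k} = e^{-k\log h_K(u)}$, this is a Laplace-type transform of $\mu$ in the variable $k$, and such transforms of a nonnegative measure are log-convex; the stated inequality is precisely an instance of this.

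Concretely, I would write $j$ as a convex combination of $i$ and $k$, namely $j = \lambda i + (1-\lambda)k$ with $\lambda = \frac{k-j}{k-i}$ and $1-\lambda = \frac{j-i}{k-i}$, where $i<j<k$ guarantees $\lambda\in(0,1)$. Correspondingly I split the integrand as $h_K(u)^{-j} = \bigl(h_K(u)^{-i}\bigr)^{\lambda}\bigl(h_K(u)^{-k}\bigr)^{1-\lambda}$ and apply Hölder's inequality with conjugate exponents $1/\lambda$ and $1/(1-\lambda)$ with respect to the measure $\mu$. This gives $\mathcal{W}_{m,j}(K) \leq \mathcal{W}_{m,i}(K)^{\lambda}\,\mathcal{W}_{m,k}(K)^{1-\lambda}$, and raising both sides to the power $k-i>0$ yields exactly $\mathcal{W}_{m,j}(K)^{k-i}\leq \mathcal{W}_{m,i}(K)^{k-j}\,\mathcal{W}_{m,k}(K)^{j-i}$, which is the claim.

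For the equality case I would recall that Hölder's inequality is an equality precisely when the two factors $h_K^{-i}$ and $h_K^{-k}$ are proportional $\mu$-almost everywhere, i.e. when $h_K(u)^{k-i}$ is $\mu$-a.e.\ constant. Because $K$ is $C^2_+$, the density of $\mu$ is positive on all of $S^{n-1}$, so $\mu$ has full support; hence equality forces $h_K\equiv c$ for some constant $c>0$. A convex body with constant support function is the ball $cB^n_2$, and conversely for a ball the quantities $\mathcal{W}_{m,k}(K)$ are explicit and equality holds.

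The main obstacle is not the inequality itself but verifying the standing positivity hypotheses that make the argument legitimate: Hölder requires $\mu$ to be a nonnegative measure, and the real powers $\mathcal{W}_{m,\cdot}(K)^{\,\cdot}$ require each $\mathcal{W}_{m,k}(K)$ to be positive. For a $C^2_+$ body one has $f_K>0$ and $h_K>0$ everywhere, so the only point to check is $A^m_p\geq 0$ (indeed $>0$ for the full-support claim). This is where the sign of the generalized binomial coefficients ${\frac{n}{n+p} \choose i_1+\dots+i_{n-1}}$ enters, and one must restrict to the range of $p$ (equivalently, to the indices $m$) for which $A^m_p$ is nonnegative, precisely the regime in which the powers in the statement are meaningful. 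Establishing, or assuming, this positivity together with the full-support property is the only delicate ingredient; the remainder is the standard Hölder/log-convexity computation sketched above.
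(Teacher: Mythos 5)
Your proposal is correct and is essentially the paper's own proof: the paper applies H\"older's inequality with the very same conjugate exponents $\frac{k-i}{j-i}$ and $\frac{k-i}{k-j}$ to the factorization $f_p(K,u)^{\frac{n}{n+p}}A_p^m\,h_K(u)^{m-j}=g_1g_2$, which is exactly your splitting $h_K^{-j}=\bigl(h_K^{-i}\bigr)^{\lambda}\bigl(h_K^{-k}\bigr)^{1-\lambda}$ with respect to the $k$-independent measure $d\mu=f_p(K,u)^{\frac{n}{n+p}}h_K(u)^{m}A_p^m\,d\sigma$, and it settles the equality case by the same proportionality argument forcing $h_K$ to be constant. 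Your additional caveat that the argument needs $A_p^m\geq 0$ (indeed $A_p^m>0$, so that $\mu$ is nonnegative with full support) is a hypothesis the paper's proof uses tacitly without comment.
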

\begin{proof}
We use H\"older's inequality 
$$
\int\limits_{S^{n-1}} g_1(u) \cdots g_q(u)\, d\sigma(u) \leq \prod\limits_{i = 1}^q\left(\int\limits_{S^{n-1}} g_i(u)^{a_i}\,d\sigma(u)\right)^{\frac1 {a_i}}
$$
with $q = 2$ and $a_1 = \frac{k - i}{j - i},\ a_2 = \frac{k - i}{k - j}$.
We take 
\begin{align*}
g_1(u) = \left(f_p(K, u)^{\frac n{n+p}} A_p^m\right)^{\frac 1{a_1}} h_k(u)^{\frac1{a_1}(m - k)}, \qquad 
g_2(u) = \left(f_p(K, u)^{\frac n{n+p}} A_p^m\right)^{\frac 1{a_2}} h_k(u)^{\frac1{a_2}(m - i)}.
\end{align*}
Then using the definition of the $L_p$ quermassintegrals \eqref{LpQMI}, we get the desired result.

In H\"older inequality the equality happens if and only if $g_1^{1/{a_1}}$ is proportional to $g_2^{1/{a_2}}$. This leads to the condition that the support function of $K$ should be a constant, i.e. $h_K(u) = const$. Then $K$ should be a ball, which follows from the fact that the support function uniquely determines a convex body.
\end{proof}
\vskip 2mm
\par
As we already shown above that some of $\mathcal{W}_{m,k}$ are mixed affine surface areas, then they should satisfy the inequality as in Theorem \ref{MASIneq}.
\vskip3mm

\subsection{Connections to information theory}

We would like  to point out a connection between the $L_p$ quermassintegrals and information theory.  To do so we  need some background.
\par
\noindent
Let $(X, \mu)$ be a measure space  and let  $dP=p\,d\mu$ and  $dQ=q\,d\mu$ be  measures on $X$ that are  absolutely continuous with respect to the measure $\mu$. 
Then the R\'enyi divergence of order $\alpha$,  introduced by  R\'enyi \cite{Ren}  for $\alpha >0$ and $\alpha \neq 1$, is defined as 
\begin{equation}\label{renyi}
D_\alpha(P\|Q)=
\frac{1}{\alpha -1} \log \int_X p^\alpha q^{1-\alpha} d\mu,
\end{equation}
It is  the convention to put  $p^\alpha q^{1-\alpha}=0$, if $p=0$ or $q=0$, even if $\alpha <0$ and $\alpha >1$. 
The integrals 
\begin{equation}\label{HellInt}
\int_X p^\alpha q^{1-\alpha} d\mu
\end{equation}
are  also  called {\em Hellinger integrals}. See e.g.  \cite {Liese/Vajda2006} for those integrals  and  additional information.   
R\'enyi divergences and Hellinger integrals 
and their related inequalities are  important tools  in  information
theory, statistics,  probability theory, machine learning and convex geometry, see e.g.,  \cite{BarronGyorfiMeulen, CaglarWerner2014, CaglarWerner2015, CoverThomas2006,  HarremoesTopsoe,  Liese/Vajda2006, ReidWilliamson2011}.

\par 
Usually, in the literature, $\alpha \geq 0$. However, we will  also consider $\alpha <0$, provided the expressions exist.
Following the ideas of  \cite{Werner2012/1}, where R\'enyi divergences for  convex bodies $K$ were introduced, we consider 
the measure space $(\partial K, \mathcal{H}^{n-1})$ and densities on $p_K, q_K$ on $\partial K$, 
\begin{equation}\label{densities}
p_K(x)= \frac{ H_{n-1} (x)}{\langle x, \nu (x) \rangle^{n}} \, , \   \ q_K(x)=\langle x, \nu(x) \rangle .
\end{equation}
\noindent 
Then 
\begin{equation}\label{PQ}
P_K=p_K\, \mu_K \ \ \ \text{and}   \ \ \   Q_K=q_K \, \mu_K
\end{equation}
are  measures on $\partial K$ that are absolutely continuous with respect  to $\mu_{K}$. We remark that those measures are the {\em cone measures} of $K$ and $K^\circ$, respectively, see e.g. \cite{PW1}.

By the change of integration formula \eqref{Integration}, the $L_p$ quermassintegrals can be written as
\begin{eqnarray} \label{LpQMI2}
\mathcal{W}_{m,k}(K) &=& \int\limits_{S^{n - 1}} f_K(u)^{\frac n{n + p}} h_K(u)^{\frac{n(1-p)}{n+p}-k + m} A^m_p\, d\sigma(u) \nonumber \\ 
&=& \int\limits_{\partial K } H_{n-1}(x)^{\frac p{n + p}} \langle x, \nu(x) \rangle ^{\frac{n(1-p)}{n+p}-k + m} A^m_p\, d\mathcal{H}^{n-1}(x) \nonumber \\
&=&  \int\limits_{\partial K }  \left( \frac{H_{n-1}(x)}{\langle x, \nu(x) \rangle ^{n} }\right) ^{\frac {p}{n + p}}  \langle x, \nu(x) \rangle ^{1- \frac{p}{n+p}}   \  \langle x, \nu(x) \rangle  ^{m-k } A^m_p\, d\mathcal{H}^{n-1}(x).
\end{eqnarray}
Thus the $L_p$ quermassintegrals are weighted (by the weight $ \langle x, \nu(x) \rangle  ^{m-k } A^m_p$)  Hellinger integrals of the measures $P_K$ and $Q_K$, respectively, 
$$
\frac{1}{\alpha -1}  \log W_{m,k}(K) 
$$
are  weighted $\alpha$-R\'enyi divergences  with weight $ \langle x, \nu(x) \rangle  ^{m-k } A^m_p$  and $\alpha = \frac{p}{n+p}$.

\section{Mixed affine surface areas} \label{MAS}

For all $p \geq 1$ and all real $s$, the $s$-th mixed $L_p$-affine surface area of $K$ is defined in~\cite{Lu}. We use generalization of this definition to all $p \ne -n$ and all real $s$, which is given in~\cite{WY} as
\begin{equation*}
as_{p,\,s}(K) = \int\limits_{S^{n - 1}} f_p(K, u)^{\frac{n - s}{n + p}}d\sigma(u),
\end{equation*}
where $f_p(K, u) = f_K(u)h_K(u)^{1 - p}$.

\begin{theorem}\label{MT4}
	Let $K$ be a convex body in $\mathbb{R}^n$ that is $C^2_+$. Let $t \in \mathbb {R}$ be such that $ 0 \leq t  < \beta(K)$. For all $p \in \mathbb{R}$, $p \neq -n$, for all $s \in \mathbb{R}$, 
	\begin{eqnarray*} 
		&& as_{p,\,s}(K + tB^n_2) = \\
		&& \sum\limits_{m = 0}^{\infty} \Biggl[\sum\limits_{k = m}^{\infty} {\frac{n-s}{n+p}(1-p) \choose k - m} \int\limits_{S^{n - 1}} f_K(u)^{\frac {n-s}{n + p}} h_K(u)^{\frac{n-s}{n+p}(1-p)-k + m} A^m_{p,\,s} \, d\sigma(u)\Biggr]  \  t^k,
	\end{eqnarray*}
	where
	\begin{eqnarray*}
		&&A^m_{p,\,s} = A^m_{p,\,s} \left(\bar{\xi}_K(u) \right) =\\
		&&\sum_{\substack{
				i_1, \dots, i_{n-1} \geq 0 \\
				i_1 + 2i_2 + \dots + (n-  1)i_{n-1}=m}}
		{\frac {n-s}{n+p} \choose i_1 + \dots + i_{n - 1}} {i_1 + \dots + i_{n - 1}\choose i_1,\, i_2, \, \ldots, i_{n-1}} \prod\limits_{j = 1}^{n - 1}\left\{ {n - 1\choose j}^{i_j} H_{j}^{i_j}\right\} 
	\end{eqnarray*}
\end{theorem}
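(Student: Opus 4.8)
\noindent\textbf{Reduction to the shape of Theorem \ref{MT1}.} Writing out the definition,
$$as_{p,s}(K) = \int_{S^{n-1}} \bigl(f_K(u)\,h_K(u)^{1-p}\bigr)^{\frac{n-s}{n+p}}\, d\sigma(u) = \int_{S^{n-1}} f_K(u)^{\frac{n-s}{n+p}}\, h_K(u)^{(1-p)\frac{n-s}{n+p}}\, d\sigma(u),$$
which is \emph{formally identical} to the integrand of $as_p$ in \eqref{pasa} with the exponent $\frac{n}{n+p}$ replaced throughout by $\frac{n-s}{n+p}$. The plan is therefore to run the proof of Theorem \ref{MT1} verbatim, carrying the extra parameter $s$ inside this single exponent: every generalized binomial coefficient of the form $\binom{n/(n+p)}{\,\cdot\,}$ becomes $\binom{(n-s)/(n+p)}{\,\cdot\,}$, turning $A^m_p$ into $A^m_{p,s}$, while the outer coefficient $\binom{n(1-p)/(n+p)}{\,\cdot\,}$ becomes $\binom{\frac{n-s}{n+p}(1-p)}{\,\cdot\,}$.

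\noindent\textbf{Behaviour under addition of a ball.} First I record how the two ingredients transform under $K \mapsto K + tB^n_2$. Since $h_{B^n_2}\equiv 1$, the support function is $h_{K+tB^n_2}(u) = h_K(u) + t$. For a $C^2_+$ body the principal radii of curvature add $t$ under Minkowski addition with $tB^n_2$, so the curvature function, being the product of the principal radii, i.e. the reciprocal of the Gauss curvature $H_{n-1}$, becomes
$$f_{K+tB^n_2}(u) = \prod_{i=1}^{n-1}\bigl(r_i(u)+t\bigr) = f_K(u)\prod_{i=1}^{n-1}\bigl(1+t\,k_i(\bar{\xi}_K(u))\bigr) = f_K(u)\sum_{j=0}^{n-1}\binom{n-1}{j} H_j(\bar{\xi}_K(u))\,t^j,$$
where the last equality uses the definition \eqref{ESFPC} of the normalized elementary symmetric functions (recall $H_0=1$). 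Substituting these two expressions into $as_{p,s}(K+tB^n_2)$ reduces the problem to expanding a power of the support-function factor and a power of the curvature factor.

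\noindent\textbf{Double expansion and collection of powers of $t$.} Set $\beta=\frac{n-s}{n+p}$. Raising the curvature factor to the power $\beta$ and applying the generalized binomial theorem \eqref{gcoef} to $\bigl(1+\sum_{j\ge 1}\binom{n-1}{j}H_j t^j\bigr)^{\beta}$, followed by the multinomial theorem \eqref{multiformula} to expand the inner powers, the coefficient of $t^m$ is precisely $A^m_{p,s}$ as defined in the statement. For the support-function factor, since $0\le t< \beta(K)\le h_K(u)$ we have $t/h_K(u)<1$, so the generalized binomial theorem gives
$$\bigl(h_K(u)+t\bigr)^{(1-p)\beta} = \sum_{r=0}^{\infty}\binom{(1-p)\beta}{r}\, h_K(u)^{(1-p)\beta-r}\,t^{r}.$$
Multiplying the two series, setting $r=k-m$ to collect the total power $t^{k}$, and integrating term by term over $S^{n-1}$ produces exactly
$$\sum_{m=0}^{\infty}\sum_{k=m}^{\infty}\binom{\tfrac{n-s}{n+p}(1-p)}{k-m}\int_{S^{n-1}} f_K(u)^{\frac{n-s}{n+p}} h_K(u)^{\frac{n-s}{n+p}(1-p)-k+m} A^m_{p,s}\, d\sigma(u)\; t^{k},$$
which is the asserted formula.

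\noindent\textbf{Main obstacle.} The only nontrivial point is the legitimacy of the term-by-term integration and the convergence of the resulting double series over the range $0\le t<\beta(K)$. Because $K$ is $C^2_+$ and compact, $f_K$, $h_K$ and all the $H_j$ are bounded, $f_K$ is bounded away from $0$, and $h_K\ge \beta(K)>t\ge 0$; the dominated-convergence/Fubini argument that justifies interchanging the two sums with the integral, together with the convergence estimate valid for $0\le t<\beta(K)$, is identical to the one in the proof of Theorem \ref{MT1}, which is the case $s=0$. Indeed $\beta=\frac{n-s}{n+p}$ enters only as a fixed real exponent and plays no role in the analytic bookkeeping, so no new difficulty arises beyond that already handled there.
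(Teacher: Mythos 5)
Your proposal is correct and follows essentially the same route as the paper: the paper proves Theorem \ref{MT4} by observing that the argument for Theorem \ref{MT1} — substituting $h_{K+tB^n_2}=h_K+t$ and $f_{K+tB^n_2}(u)=f_K(u)\bigl(1+\sum_{k\geq 1}\binom{n-1}{k}H_k t^k\bigr)$ into \eqref{pasa}, then expanding both factors by the generalized binomial and multinomial theorems and collecting powers of $t$ — goes through verbatim with the exponent $\tfrac{n}{n+p}$ replaced by $\tfrac{n-s}{n+p}$, exactly as you do. Your justification of convergence and term-by-term integration is at the same level of detail as the paper's (which likewise only verifies $t/h_K(u)<1$ and boundedness of the curvature expressions), so nothing is missing relative to the paper's own proof.
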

\par
\noindent
When $s=0$, we recover Theorem \ref{MT1} and when $p = 1$ we get the expansion for the $s$-mixed affine surface areas.

\noindent{\bf Remark on the polytopal case}

\noindent The next proposition gives the corresponding Steiner formula for the $s$-mixed $p$-affine surface area of the polytope $P$.
\vskip 2mm
\noindent
\begin{theorem}\label{MT5}
	Let $P$ be a convex polytope in $\mathbb{R}^n$. Let $t$ be such that $0\leq t< \beta(P)$.
	\par
	\noindent
	For all $s \in \mathbb{R}$, for all $p \in \mathbb{R}$ such that if $n < s$ then $p \notin (-s, -n]$ and if $n > s$ then $p \notin [-n, -s)$,
	\begin{eqnarray*} \label{ps-formula polytope}
		&& as_{p,\,s}(P + tB^n_2) = \\
		&& \sum\limits_{m=0}^{\infty} {\frac{n-s}{n+p}(1-p) \choose m} \  \sum_{v \in \text{vert P} }  \, \int\limits_{u \in \text{Norm} (v)} h_P(u)^{\frac{(n-s)(1-p)}{n+p}- m} d\sigma(u)  \  t^{m+ \frac{(n-1)(n-s)}{n+p}}.
	\end{eqnarray*}
	
\end{theorem}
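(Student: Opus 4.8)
The plan is to express $as_{p,s}(P+tB^n_2)$ as an integral over the boundary of the parallel body that is sensitive to the face structure of $P$, and then to show that only the vertices of $P$ contribute. First, rewriting the defining integral over $S^{n-1}$ as an integral over $\partial K$ by the change of variables \eqref{Integration} (so that $f_K=H_{n-1}^{-1}$ and $d\sigma(u)=H_{n-1}(x)\,d\mathcal{H}^{n-1}(x)$), one checks that
\begin{equation*}
as_{p,s}(K)=\int_{\partial K}H_{n-1}(x)^{\frac{p+s}{n+p}}\,\langle x,\nu(x)\rangle^{\frac{(n-s)(1-p)}{n+p}}\,d\mathcal{H}^{n-1}(x).
\end{equation*}
For the non-smooth body $K=P+tB^n_2$ this should be read through the generalized Gauss curvature $H_{n-1}$, defined $\mathcal{H}^{n-1}$-almost everywhere; this is the correct object because, unlike the naive integral over $S^{n-1}$, it assigns positive weight to the flat and cylindrical pieces of $\partial(P+tB^n_2)$.

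Next I would decompose $\partial(P+tB^n_2)$ according to the faces of $P$. The normal cones $\mathrm{Norm}(v)$ of the vertices $v\in\mathrm{vert}\,P$, restricted to $S^{n-1}$, tile the sphere up to a set of $\sigma$-measure zero, the remaining directions being the lower-dimensional normal cones of the positive-dimensional faces. Over a vertex $v$ the boundary is the spherical cap $\{v+tu:\ u\in\mathrm{Norm}(v)\cap S^{n-1}\}$, on which $H_{n-1}=t^{-(n-1)}$, $\langle x,\nu(x)\rangle=\langle v,u\rangle+t=h_P(u)+t$ (since $h_P(u)=\langle v,u\rangle$ for $u$ in the normal cone of $v$), and $d\mathcal{H}^{n-1}(x)=t^{n-1}\,d\sigma(u)$. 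Over any face of dimension $k\ge1$ (the facets $k=n-1$ included) at least one principal curvature vanishes, so $H_{n-1}=0$ there, while $\langle x,\nu\rangle$ and the surface measure remain finite and positive.

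The hypotheses on $p$ and $s$ serve exactly to control these degenerate pieces, through the sign of the exponent $\frac{p+s}{n+p}$ of $H_{n-1}$: they rule out precisely the open interval on which $\frac{p+s}{n+p}<0$. On the admissible range one has $\frac{p+s}{n+p}>0$ (the borderline value $p=-s$, where the exponent vanishes, being the familiar degenerate case already visible at $p=s=0$), so that $H_{n-1}^{\frac{p+s}{n+p}}=0$ on every positive-dimensional face and these pieces contribute $0$; only the vertex caps survive. On the excluded interval the exponent is negative, $H_{n-1}^{\frac{p+s}{n+p}}=+\infty$ on a set of positive $\mathcal{H}^{n-1}$-measure, and $as_{p,s}=+\infty$ (compare case (ii) of Theorem~\ref{MT3} for $s=0$). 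Collecting the surviving terms and using that the normal cones tile the sphere yields
\begin{equation*}
as_{p,s}(P+tB^n_2)=t^{\frac{(n-1)(n-s)}{n+p}}\sum_{v\in\mathrm{vert}\,P}\int_{\mathrm{Norm}(v)}\bigl(h_P(u)+t\bigr)^{\frac{(n-s)(1-p)}{n+p}}\,d\sigma(u).
\end{equation*}

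Finally I would expand $(h_P(u)+t)^{\alpha}$, with $\alpha=\frac{(n-s)(1-p)}{n+p}$, by the generalized binomial theorem $(h_P(u)+t)^{\alpha}=\sum_{m\ge0}\binom{\alpha}{m}h_P(u)^{\alpha-m}t^m$. Since $0\le t<\beta(P)=\min_u h_P(u)\le h_P(u)$ by \eqref{minhk}, the ratio $t/h_P(u)$ is bounded by $t/\beta(P)<1$ uniformly in $u$, so the series converges uniformly on $S^{n-1}$; this justifies interchanging the summation with the finite vertex sum and the integration. Relabelling the power of $t$ as $m+\frac{(n-1)(n-s)}{n+p}$ then gives the asserted formula. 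The main obstacle is the step where the face decomposition meets the generalized-curvature reading of $as_{p,s}$: one must verify that the generalized Gauss curvature of the parallel body is indeed $t^{-(n-1)}$ on the vertex caps and $0$ (on a set of positive measure) off them, and thereby confirm that the admissible range of $(p,s)$ is governed entirely by the sign of $\frac{p+s}{n+p}$. The subsequent binomial expansion and collection of terms are routine given the uniform bound $t<\beta(P)$.
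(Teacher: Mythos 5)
Your proposal is correct and follows essentially the same route as the paper: the paper gives no separate proof of Theorem \ref{MT5}, but its proof of Theorem \ref{MT3} is the evident template, and your argument reproduces it for $as_{p,s}$ --- rewrite the integral so that the Gauss curvature carries the exponent $\frac{p+s}{n+p}$, observe that the excluded parameter range is exactly where this exponent is negative (flat parts then give $+\infty$), note that on the admissible range the flat and cylindrical pieces of $\partial(P+tB^n_2)$ contribute zero so only the vertex caps (equivalently, the normal cones $\mathrm{Norm}(v)$, which cover $S^{n-1}$ up to $\sigma$-measure zero) survive with $f_{P+tB^n_2}=t^{n-1}$, and finish with the generalized binomial expansion, uniformly convergent since $t/h_P(u)\leq t/\beta(P)<1$. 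The one borderline you rightly flag, $p=-s$ (where the exponent vanishes and your positivity argument does not apply), is also left untreated by the paper's statement of Theorem \ref{MT5}; for $s=0$ it is precisely case (iii) of Theorem \ref{MT3}, where the vertex formula is replaced by $n\,\vol_n(P+tB^n_2)$, so strictly speaking that value of $p$ should be excluded or handled separately in both your proof and the paper's theorem.
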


\section{Proofs} \label{proofs}

We  start by showing  that Corollary \ref{C1} and \ref{C2} are a consequence of Theorem \ref{MT1}.

\subsection {Proof of Corollary \ref{C1} and \ref{C2}}

\vskip 3mm
\noindent
{\bf Proof of Corollary \ref{C1}}
\vskip 2mm
\noindent
By (\ref{0-asa}), we get for the left hand side of (\ref{p-formula}), 
$$as_{0}(K + tB^n_2) = n  \vol(K + tB^n_2).
$$
The right hand side of  (\ref{p-formula}) becomes
\begin{eqnarray*}
	&&\sum\limits_{m = 0}^{\infty} \Biggl[\sum\limits_{k = m}^{\infty} {1 \choose k - m} \  t^k \int\limits_{S^{n - 1}} f_K(u)  h_K(u)^{1 -k + m} A^m_0\, d\sigma(u)\Biggr] =\\
	&&
	\sum\limits_{m = 0}^{\infty}\Biggl[\  t^m  \  \int\limits_{S^{n - 1}} f_K(u) h_K(u) A^m_0\, d\sigma(u)  +  t^{m+1}\   \int\limits_{S^{n - 1}} f_K(u)  A^{m}_0\, d\sigma(u)  \Biggr] ,
\end{eqnarray*}
where 
\begin{eqnarray*}
	&& A^m_{0} =  \\
	&& \hskip -4mm  \sum_{\substack{
			i_1, \dots, i_{n-1} \geq 0 \\
			i_1 + 2i_2 + \dots + (n-  1)i_{n-1}=m}}
	{1 \choose i_1 + \dots + i_{n - 1}} {i_1 + \dots + i_{n - 1}\choose i_1,\, i_2, \, \ldots, i_{n-1}} \prod\limits_{j = 1}^{n - 1}\left\{ {n - 1\choose j}^{i_j} H_{j}^{i_j}\right\}.
\end{eqnarray*}
By (\ref{gcoef}), we only get a contribution for $A^m_{0}$ if either $i_1 + i_2 + \dots + i_{n-1}=0$, i.e., all $i_j=0$, or if $i_1 + i_2 + \dots + i_{n-1}=1$, which means that 
$i_j=1$ and $i_k=0$ for all $k \neq j$. Then $m=j$ as the summation is over all $i_1, \dots, i_{n-1} \geq 0$ such that 
$i_1 + 2i_2 + \dots + (n-  1)i_{n-1}=m$.  We also use (\ref{Integration}) and then we get for the right hand side of  (\ref{p-formula})
\begin{eqnarray*}
	&&\sum\limits_{m = 0}^{n-1}   {n - 1\choose m } \Biggl[ t^m  \  \int\limits_{\partial K}  \langle \nu_K(x), x \rangle H_m  \, d\mathcal{H}^{n - 1}  +  t^{m+1}\   \int\limits_{\partial K} H_m d\mathcal{H}^{n - 1}  \Biggr]  \\
	&&= \int\limits_{\partial K}  \langle \nu_K(x), x \rangle   \, d\mathcal{H}^{n - 1}(x) \\
	&&+ \sum\limits_{m = 1}^{n-1}   {n - 1\choose m } \Biggl[ t^m  \  \int\limits_{\partial K}  H_{m-1}(x)  \, d\mathcal{H}^{n - 1}(x)  +  t^{m+1}\   \int\limits_{\partial K} H_m(x) d\mathcal{H}^{n - 1} (x) \Biggr] .
\end{eqnarray*}
In the last equality we have used (\ref{mean curvature functions relation}). Collecting terms and using  the recursive identity for binomial coefficients
$$
{n \choose k} = {n - 1 \choose m}  + {n - 1 \choose m - 1}
$$
for all integers  $n, m$ such that $1 \leq m \leq n - 1$, 
and the identity (\ref{QMI}), we get for the right hand side of  (\ref{p-formula})
$$
n\vol(K) + \sum\limits_{m = 1}^{n}   {n \choose m } \    \int\limits_{\partial K}  H_{m-1}(x)  \, d\mathcal{H}^{n - 1}(x)  \  t^m =n  \sum\limits_{m = 0}^{n}   {n \choose m } \   W_m \ t^m.
$$
This shows that the classical Steiner formula is a corollary of Theorem \ref{MT1}.
\vskip 3mm
\noindent
{\bf Proof of Corollary \ref{C2}}
\vskip 2mm
\noindent
By (\ref{def:infty}) and  (\ref{dualQMI}) we get for the left hand side of (\ref{p-formula}), 
$$as_{\pm \infty }(K + tB^n_2) = n\   \vol((K + tB^n_2)^\circ) = n \  \widetilde{W}_{0} ((K + tB^n_2)^\circ).
$$
The right hand side of  (\ref{p-formula}) becomes
\begin{eqnarray*}
	&&\sum\limits_{m = 0}^{\infty} \Biggl[\sum\limits_{k = m}^{\infty} {-n \choose k - m} \  t^k \int\limits_{S^{n - 1}}  h_K(u)^{-n -k + m} A^m_{\pm \infty}\,d\sigma(u)\Biggr],
\end{eqnarray*}
where 
\begin{eqnarray*}
	&& A^m_{\pm \infty} =  \\
	&& \hskip -4mm  \sum_{\substack{
			i_1, \dots, i_{n-1} \geq 0 \\
			i_1 + 2i_2 + \dots + (n-  1)i_{n-1}=m}}
	{0 \choose i_1 + \dots + i_{n - 1}} {i_1 + \dots + i_{n - 1}\choose i_1,\, i_2, \, \ldots, i_{n-1}} \prod\limits_{j = 1}^{n - 1}\left\{ {n - 1\choose j}^{i_j} H_{j}^{i_j}\right\}.
\end{eqnarray*}
We only get a contribution for $A^m_{\pm \infty}$ if  $i_1 + i_2 + \dots + i_{n-1}=0$, i.e., if $i_j=0$ for all $j$. This means that we only get a contribution  for $m=0$. 
As $A^0_{\pm \infty} =1$, 
the right hand side of  (\ref{p-formula}) becomes
\begin{eqnarray*}
	\sum\limits_{k = 0}^{\infty}   {-n \choose k }\   t^k  \  \int\limits_{S^{n - 1}}  h_K(u)^{-n -k }  \, d\sigma(u).
\end{eqnarray*}
If $K$ is a convex body such that $0 \in \text{int}(K)$,  then 
$$
\rho_{K^\circ}(u) = \frac1{h_K(u)} \quad \quad \text{ for all } u\in S^{n - 1}. 
$$
Hence, with (\ref{dualQMI}), 
\begin{eqnarray*}
	&&\sum\limits_{k = 0}^{\infty}   {-n \choose k } \  t^k  \  \int\limits_{S^{n - 1}}  h_K(u)^{-n -k }  \, d\sigma (u) = \\
	&&\sum\limits_{k = 0}^{\infty}   {-n \choose k } \  t^k  \  \int\limits_{S^{n - 1}}  \rho_{K\circ} (u)^{n +k}  \, d\sigma (u) 
	\  =  \  n\  \sum\limits_{k = 0}^{\infty}   {-n \choose k }  \widetilde{W}_{-k} (K^\circ)  \  t^k.
\end{eqnarray*}
\vskip 3mm

\vskip 4mm
\noindent
Before we prove the general case of Theorem \ref{MT1}, it will be helpful to  treat a special case.
\vskip 2mm
Throughout the paper, we will also need the facts that are listed next.
\par
\noindent
First note that it is not difficult to see that the support function of $K+tB^n_2$ can be expressed in terms of support function of $K$ as 
\begin{equation}\label{supportfunction}
h_{K+tB^n_2}(u) = h_K(u) + t.
\end{equation}
Now we write the expression for the curvature function $f_{K + tB^n_2}(u).$ Recall that the curvature function is reciprocal of the Gauss curvature, that is,
\begin{equation*}
f_{K + tB^n_2}(u) = \frac1{H_{n-1}(\bar{\xi}_{K + tB^n_2}(u))}.
\end{equation*}
Since $\bar{\xi}_{K + tB^n_2}(u)$ is the point on $\partial (K + tB^n_2)$ that has  $u$ as unique  outer unit normal,  $\bar{\xi}_{K + tB^n_2}(u)= x + tu$,
where $x$ is this point on $\partial K$ that has $u$ as unique outer normal, i.e., $x=\bar{\xi}_{K}(u)$.  We will also use the fact that the Gauss curvature 
$
H_{n-1}(x + tu)$
is the product of  the principal curvatures $k_1^t(x + tu), \dots, k_{n - 1}^t(x + tu).$ A  well-known fact from differential geometry provides the connection between the principal curvatures $k_i^t$ of the outer parallel body $K + tB^n_2$ and principal curvatures $k_i$ of the body $K,$ namely
$$
k_i^t (x + tu) = \frac{k_i(x)}{1 + k_i(x)}, 
$$
for $x\in \partial K$ and $u$ the outer unit normal vector to $K$ at the point $x.$ Therefore, the Gauss curvature of the parallel body is
\begin{eqnarray*} \label{Gausscurvature}
	H_{n-1}(x + tu) = \prod_{i = 1}^{n - 1}k_i^t(x + tu) = \prod_{i = 1}^{n - 1} \frac{k_i(x)}{1 + tk_i(x)} = \frac{H_{n-1}(x)}{\prod_{i = 1}^{n - 1} \left(1+ tk_i(x)\right)}.
\end{eqnarray*}
Since $u$ is a unit outward normal vector at $x$ to $K$ (and  also a unit outer  normal vector at $x + tu$ to $K + tB^n_2$), we derive an expression for the curvature function $f_{K + tB^n_2}(u)$
\begin{eqnarray} \label{curvature function of parallel body}
f_{K + tB^n_2}(u) &=& f_K(u) \prod_{i = 1}^{n - 1} \Big(1+ tk_i\big(\bar{\xi}_K(u)\big)\Big) = f_K(u) \sum_{k = 0}^{n - 1} {n - 1 \choose k} H_k(\bar{\xi}_K(u)) t^k \nonumber\\
&=& f_K(u) \left(1 +  \sum_{k = 1}^{n - 1} {n - 1 \choose k} H_k(\bar{\xi}_K(u)) t^k\right).
\end{eqnarray}

\subsection{The case when $\frac{n}{n+p}$ is a natural number} \label{special case}

We consider case when $\frac{n}{n+p}$ is a natural number, that is
$$
\frac{n}{n+p} = l \quad \text{ where } l\in\mathbb{N},
$$
or equivalently,  
$$
p = -\frac{n(l-1)}{l},  \quad \text{ for } l\in\mathbb{N}.
$$
Then $(1 - p)\frac{n}{n+p} = l +n(l - 1) \in \mathbb{N},$ since $l \in \mathbb{N}.$ 

Then, by (\ref{pasa}), (\ref{supportfunction}) and (\ref{curvature function of parallel body}), 
\begin{eqnarray*}
	&&as_{p}(K+ tB^n_2) =  
	\int\limits_{S^{n - 1}}f_K^l(u)\left(1 + \sum\limits_{k = 1}^{n - 1} {n-1 \choose k}H_kt^k\right)^l \left(h_k + t\right)^{l+n(n-1)} d\sigma(u)\\
	&=& \int\limits_{S^{n - 1}}f_K^l(u)\Biggl[{l \choose 0}+{l \choose 1} \sum\limits_{k = 1}^{n - 1} {n-1 \choose k}H_kt^k + {l\choose 2} \left(\sum\limits_{k = 1}^{n - 1} {n-1 \choose k}H_kt^k\right)^2 + \ldots\\
	&+& {l\choose l} \left(\sum\limits_{k = 1}^{n - 1} {n-1 \choose k}H_kt^k\right)^l \Biggr]\cdot\\
	&&\Biggl[{l + n(l - 1)\choose 0}h^{l + n(l - 1)}_K + {l + n(l - 1)\choose 1}h^{l + n(l - 1) - 1}_Kt + \ldots + {l + n(l - 1)\choose l + n(l - 1)}t^{l + n(l-1)} \Biggr] d\sigma(u),
\end{eqnarray*}
where we used Taylor series expansion of the curvature term and  support function term. Note that $\frac t{h_K(u)} \leq \frac{t}{\beta(K)} < 1.$
Hence the binomial series for the support function uniformly converges on $S^{n - 1}.$ Since $K$ is $C^2_+$, all curvature expressions are bounded above and strictly positive, independently of $x$.


Expanding the summations, we can write the general term corresponding to the power $t^m$ as
\begin{eqnarray*}
	& &\sum_{\substack{
			i_1, \dots, i_{n-1} \geq 0\\
			i_1 + 2i_2 + \dots + (n-  1)i_{n-1}=m}}
	{l \choose i_1 + \dots + i_{n - 1}} {i_1 + \dots + i_{n - 1}\choose i_1,\, i_2, \, \ldots, i_{n-1}} {n - 1\choose 1}^{i_1}\cdot \ldots \cdot {n - 1 \choose n - 1}^{i_{n-1}} H_1^{i_1}\cdot \ldots \cdot H_{n - 1}^{i_{n - 1}} \\
	&&=\sum_{\substack{
			i_1, \dots, i_{n-1} \geq 0\\
			i_1 + 2i_2 + \dots + (n-  1)i_{n-1}=m}}
	{l \choose i_1 + \dots + i_{n - 1}} {i_1 + \dots + i_{n - 1}\choose i_1,\, i_2, \, \ldots, i_{n-1}} \prod\limits_{j = 1}^{n - 1} {n - 1\choose j}^{i_1} H_{j}^{i_j}. 
\end{eqnarray*}
The multinomial coefficients here are coming from the multinomial formula~(\ref{multiformula}).


Therefore, the following gives the general formula for the $L_p$ affine surface area
\begin{eqnarray*}
	&&as_{p}(K + tB^n_2) =\\
	&&\sum\limits_{m = 0}^{l(n-1)} \Biggl[\sum\limits_{k = m}^{l+n(l - 1) + m} {l+n(l - 1) \choose k - m}t^k \int\limits_{S^{n - 1}} f^l_K(u) h_K^{l+n(l - 1)-k + m}\cdot\\
	&&\left(\sum_{\substack{
			i_1, \dots, i_{n-1} \geq 0\\
			i_1 + 2i_2 + \dots + (n-  1)i_{n-1}=m}} {l \choose i_1 + \dots + i_{n - 1}} {i_1 + \dots + i_{n - 1}\choose i_1,\, i_2, \, \ldots, i_{n-1}} \prod\limits_{j = 1}^{n - 1}\left\{ {n - 1\choose j}^{i_j} H_{j}^{i_j}\right\} \right) d\sigma(u)\Biggr].
\end{eqnarray*}
The parameter $m$ determines the number of sums inside and changes between 0 and $l(n - 1),$ since the highest power of $t$ is $l + n(l - 1) + l(n - 1).$

For convenience, we denote the part of the expression under the integral as $A^m_{p,\,s} = A^m_{p,\,s}\left(\bar{\xi}_K(u) \right)$:
\begin{equation}\label{apm}
A^m_{p,\,s} = \sum_{\substack{
		i_1, \dots, i_{n-1} \geq 0 \\
		i_1 + 2i_2 + \dots + (n-  1)i_{n-1}=m}}
{\frac {n-s}{n+p} \choose i_1 + \dots + i_{n - 1}} {i_1 + \dots + i_{n - 1}\choose i_1,\, i_2, \, \ldots, i_{n-1}} \prod\limits_{j = 1}^{n - 1}\left\{ {n - 1\choose j}^{i_j} H_{j}^{i_j}\right\}
\end{equation}
for any real $p\ne -n$ and $s.$ Note that in the current case $p = \frac{n(1-l)}{l}, \ s=0$  and the expression $\frac n{n + p} = l.$ Coefficients $A^m_p = A^m_{p,\,0}$ represent a sum of mixed products of symmetric functions of the principal curvatures $H_i$ (\ref{ESFPC}) with corresponding multinomial coefficients.

Thus,
\begin{equation}\label{special p-formula}
as_{\frac{n(1-l)}{l}}(K+tB^n_2) = \sum\limits_{m = 0}^{l(n-1)} \left[\sum\limits_{k = m}^{l+n(l - 1) + m} {l+n(l - 1) \choose k - m}t^k \int\limits_{S^{n - 1}} f^l_K(u) h_K^{l+n(l - 1)-k + m}A^m_{\frac{n(1-l)}{l}} \, d\sigma(u)\right]
\end{equation}
for $l\in \mathbb{N}.$

\subsection{The case of real $p\ne -n$}
The $L_p$ affine surface area of $K+tB_2^n$	is given by~(\ref{pasa}).
Using relation~\eqref{curvature function of parallel body} for the curvature function of the parallel body, we can rewrite it in the following form
\begin{equation*}
as_p(K+tB_2^n) =\int\limits_{S^{n - 1}} f_K(u)^{\frac n{n + p}}\left(1 +  \sum_{k = 1}^{n - 1} {n - 1 \choose k} H_k(\bar{\xi}_K(u)) t^k\right)^{\frac{n}{n + p}} \left(h_K(u) + t\right)^{\frac {n(1-p)}{n + p}} d\sigma(u).
\end{equation*}	
Using Taylor series expansions of the curvature and support functions terms, we have
\begin{eqnarray*}
	as_p(K+tB_2^n) =\int\limits_{S^{n - 1}} f_K(u)^{\frac n{n + p}}\sum\limits_{i = 0}^\infty \left\{ {{\frac{n}{n + p}} \choose i} \left[\sum_{k = 1}^{n - 1} {n - 1 \choose k} H_k(\bar{\xi}_K(u)) t^k\right]^i \right\} \cdot\\
	\cdot \sum\limits_{j = 0}^\infty {\frac {n(1-p)}{n + p} \choose j} h_K(u)^{\frac {n(1-p)}{n + p} - j}t^j\, d\sigma(u).
\end{eqnarray*}	

Now in analogy with the procedure used in Section~\ref{special case}, we have the general formula

\begin{align*}
as_{p}&(K + tB^n_2) =\\
&\sum\limits_{m = 0}^{\infty} \Biggl[\sum\limits_{k = m}^{\infty} {\frac{n(1-p)}{n+p} \choose k - m}t^k \int\limits_{S^{n - 1}} f_K(u)^{\frac n{n + p}} h_K(u)^{\frac{n(1-p)}{n+p}-k + m}\cdot\\
&\left(\sum_{\substack{
		i_1, \dots, i_{n-1} \geq 0\\
		i_1 + 2i_2 + \dots + (n-  1)i_{n-1}=m}} {\frac{n}{n+p} \choose i_1 + \dots + i_{n - 1}} {i_1 + \dots + i_{n - 1}\choose i_1,\, i_2, \, \ldots, i_{n-1}} \prod\limits_{j = 1}^{n - 1}\left\{ {n - 1\choose j}^{i_j} H_{j}^{i_j}\right\} \right) d\sigma(u)\Biggr].
\end{align*}
Using the notation $A^m_p$ defined in~(\ref{apm}), we can write the expression above in a more compact way:
\begin{align*}
as_{p}(K + tB^n_2) =\sum\limits_{m = 0}^{\infty} \Biggl[\sum\limits_{k = m}^{\infty} {\frac{n(1-p)}{n+p} \choose k - m}t^k \int\limits_{S^{n - 1}} f_K(u)^{\frac n{n + p}} h_K(u)^{\frac{n(1-p)}{n+p}-k + m} A^m_p\, d\sigma(u)\Biggr].
\end{align*}
We note that the first coefficient in this expansion represents the $L_p$ affine surface area $as_p(K)$ of a body~$K.$

Similarly, an expansion for the $s$-th mixed $p$-affine surface area $as_{p,\,s}(K~+~tB^n_2)$ is obtained for all real $s.$ In fact, for a convex body $K$ and $ 0 \leq t  < \beta(K)$, one has 
\begin{align*}
as_{p,\,s}(K + tB^n_2) =\sum\limits_{m = 0}^{\infty} \Biggl[\sum\limits_{k = m}^{\infty} {\frac{n-s}{n+p}(1-p) \choose k - m}t^k \int\limits_{S^{n - 1}} f_K(u)^{\frac {n-s}{n + p}} h_K(u)^{\frac{n-s}{n+p}(1-p)-k + m} A^m_{p,\,s}\, d\sigma(u)\Biggr].
\end{align*}
Note that the first coefficient in this expansion gives the $s$-mixed $p$-affine affine surface area $as_{p,\,s}(K)$ of a body $K.$ If $p = 1,$ this gives the expansion for the $s$-th mixed affine surface area $as_{1,\,s}(K+tB^n_2)$ for any real $s.$

This finishes the proof of Theorem~\ref{MT1} and Theorem~\ref{MT4}.

\subsection{Proof of Theorem \ref{MT2}} \label{proofMT2}

The $L_{-n}$ affine surface area of $K + tB^n_2$ is given by
\begin{equation*}
as_{-n}(K+tB^n_2) = \max _{u \in S^{n-1}} f_{K+tB^n_2}(u)^\frac{1}{2} h_{K+tB^n_2}(u)^\frac{n+1}{2}.
\end{equation*}
Using relations (\ref{curvature function of parallel body}) and (\ref{supportfunction}), we can rewrite it as
\begin{eqnarray*}
	& &as_{-n}(K+tB^n_2) =\\
	&=& \max _{u \in S^{n-1}} f_K(u)^\frac{1}{2} h_K(u)^\frac{n+1}{2} \left(\left[1 +  \sum_{k = 1}^{n - 1} {n - 1 \choose k} H_k(\bar{\xi}_K(u)) t^k\right](h_K(u) + t)^{n+1}\right)^\frac12\\
	&=& \max _{u \in S^{n-1}} f_K(u)^\frac{1}{2} h_K(u)^\frac{n+1}{2} \left(1 + \sum\limits_{j = 1}^{2n}\,\sum\limits_{k + i = j} \left[{n - 1 \choose k}{n + 1 \choose i} \frac{H_k(\bar{\xi}_K(u))}{h_K(u)^{i}}\right] t^j \right)^\frac12.
\end{eqnarray*}
\par
\noindent
For convenience, we denote the coefficients in front of powers of $t$ as
\begin{equation*}
B_j = B_j(u) = \sum\limits_{k + i = j} \left[{n - 1 \choose k}{n + 1 \choose i} \frac{H_k(\bar{\xi}_K(u))}{h_K(u)^{i}}\right], \qquad  0 \leq j \leq 2n.
\end{equation*}
\par
\noindent
Using Taylor series expansion, we obtain
\begin{eqnarray*}
	&&as_{-n}(K+tB^n_2) = \max _{u \in S^{n-1}} f_K(u)^\frac{1}{2} h_K(u)^\frac{n+1}{2} \left(1 + \sum\limits_{j = 1}^{2n} B_j t^j\right)^\frac12\\
	&&= \max _{u \in S^{n-1}} f_K(u)^\frac{1}{2} h_K(u)^\frac{n+1}{2} \sum\limits_{m = 0}^\infty\left[ \sum_{\substack{
			i_1, \dots, i_{2n} \geq 0\\
			i_1 + 2i_2 + \dots + 2ni_{2n}=m}} 
	{\frac12 \choose i_1 + \dots + i_{2n}} {i_1 + \dots+ i_{2n} \choose i_1,\, \dots,\,i_{2n}}  \prod\limits_{q = 1}^{2n} B_q^{i_q}\right] t^m.
\end{eqnarray*}
Similarly to (\ref{apm}), we can introduce coefficients $A^m_{-n}= A^m_{-n}(u)$ as
\begin{equation*}
A^m_{-n}= A^m_{-n}(u) = \sum_{\substack{
		i_1, \dots, i_{2n} \geq 0\\
		i_1 + 2i_2 + \dots + 2ni_{2n}=m}} 
{\frac12 \choose i_1 + \dots + i_{2n}} {i_1 + \dots+ i_{2n} \choose i_1,\, \dots,\,i_{2n}}  \prod\limits_{q = 1}^{2n} B_q^{i_q}.
\end{equation*}
Then
\begin{align*}
as_{-n}(K+tB^n_2) = \max _{u \in S^{n-1}} f_K(u)^\frac{1}{2} h_K(u)^\frac{n+1}{2}\sum\limits_{m = 0}^\infty A^m_{-n}t^m.
\end{align*}

\subsection{Proof of Theorem \ref{MT3}}

For $-n < p < 0$, the exponent $\frac p{n+p}$ of the Gauss curvature $H_{n-1}(x)$ in \eqref{def:paffine} is negative. Since for the set $\{y\in\partial(P+tB^n_2): \, H_{n-1}(y) = 0 \}$ Gauss curvature is equal to zero, we have that $as_p(P+tB^n_2) = \infty$.

Next, we consider the case when $p \notin \left[-n, 0\right]$. Note, that in this case the exponent $\frac p{n+p}$ of $H_{n-1}(x)$ in \eqref{def:paffine} is positive.
We  have that   $f_{P+ t B^n_2} (u) \neq 0$, only for those $u \in S^{n-1}$ for which $f_{P+ t B^n_2} (u) < \infty$ and then $f_{P+ t B^n_2} (u)^{\frac n{n+p}} = t^{\frac{n(n-1)}{n+p}}$. 
By  this, (\ref{pasa}) and (\ref{curvature function of parallel body}) 
\begin{eqnarray*}
	as_{p}(P + t B^n_2) &=& 
	\int\limits_{ \{y\in\partial(P+tB^n_2): \, H_{n-1}(y) = 0 \}} \frac{H_{n-1}(y)^{\frac p{n+p}}}{\left<y, \nu(y) \right>^{\frac{n(p-1)}{n+p}}}  d\mathcal{H}^{n-1}(y)  \\
	&+&
	\int\limits_{ \{y\in\partial(P+tB^n_2):\, H_{n-1}(y) \ne 0 \}} \frac{H_{n-1}(y)^{\frac p{n+p}}}{\left<y, \nu(y) \right>^{\frac{n(p-1)}{n+p}}} d\mathcal{H}^{n-1}(y)\\
	&=&
	\int\limits_{ \{u: f_{P+ t B^n_2} (u) < \infty \}} f_{P+tB^n_2}(u)^{\frac n{n+p}} h_{P+tB^n_2}(u)^{\frac {n(1-p)}{n + p}} d\sigma(u)\\
	&=&
	t^{\frac{n(n-1)}{n+p}} \  \int\limits_{ \{u: f_{P+ t B^n_2} (u) < \infty \}} (h_{P}(u)  +t)^{\frac {n(1-p)}{n + p}} d\sigma(u) \\
	&=& t^{\frac{n(n-1)}{n+p}} \  \int\limits_{ \{u: f_{P+ t B^n_2} (u) < \infty \}} h_{P}(u)^{\frac {n(1-p)}{n + p}} \sum_{j=0}^ \infty {\frac{n(1-p)}{n+p} \choose j} \    \frac{t^j}{ h_{P}(u)^j} \  d\sigma(u)\\
	&=&\sum_{j=0}^ \infty     {\frac{n(1-p)}{n+p} \choose j}  \  t^{j + \frac{n(n-1)}{n+p}} \  \int\limits_{ \{u: f_{P+ t B^n_2} (u) < \infty \}} h_{P}(u)^{\frac {n(1-p)}{n + p}-j}  \  d\sigma(u)\\
	&=& \sum\limits_{j = 0}^{\infty} {\frac{n(1-p)}{n+p} \choose j}   \  \sum_{v \in \text{vert}\,P }  \, \int\limits_{u \in \text{Norm} (v)} h_P(u)^{\frac{n(1-p)}{n+p}- j} d\sigma(u)  \  t^{j+ \frac{n(n-1)}{n+p}}.
\end{eqnarray*}
As by assumption $t < \beta(P)$, we have for all $u \in S^{n-1}$ that $\frac{t}{h_P(u)} \leq \frac{t}{\beta(P)} < 1$ and therefore the above infinite sum converges uniformly.
Moreover, for all $u \in S^{n-1}$, $\lambda(P) \leq h_P(u) \leq \Lambda(P)$. Thus we can interchange integration and summation, which was done in the second  last equality above.

\par
For $p = 0$, we have a different situation, namely,
\begin{eqnarray*}
	as_{0}(P + t B^n_2) &=& 
	\int\limits_{ \{y\in\partial(P+tB^n_2): \, H_{n-1}(y) = 0 \}} \frac{1}{\left<y, \nu(y) \right>^{-1}}  d\mathcal{H}^{n-1}(y)  \\
	&+&
	\int\limits_{ \{y\in\partial(P+tB^n_2):\, H_{n-1}(y) \ne 0 \}} \frac{1}{\left<y, \nu(y) \right>^{-1}} d\mathcal{H}^{n-1}(y)\\
	&=&
	\int\limits_{ \partial(P+tB^n_2)} \left<y, \nu(y)\right> d\mathcal{H}^{n-1}(y) = n\, \vol_n(P+tB^n_2).
\end{eqnarray*}

\addressT

\vskip3mm

\addressW

\end{document}